\newtheorem{thm}{Theorem}[section]
\newtheorem{conj}[thm]{Conjecture}
\newtheorem{lem} [thm]{Lemma}
\theoremstyle{definition} 
\newtheorem{defn}[thm]{Definition}
\raggedbottom \pagestyle{myheadings} \hbadness = 10000 \tolerance = 10000
\numberwithin{equation}{section}
\begin{document}
	\label{'ubf'}
	\setcounter{page}{1} 
	
	\markboth {\hspace*{-9mm} \centerline{\footnotesize \sc
			On Signed Distance in Product of Signed Graphs}
	}
	{ \centerline {\footnotesize \sc
			Shijin T V, Soorya P, Shahul Hameed K, Germina K A} \hspace*{-9mm}
	}
	\begin{center}
		{
			{\huge \textbf{On Signed Distance in \\ Product of Signed Graphs
				}
			}
			
			\bigskip
			Shijin T V \footnote{\small Department of Mathematics, Central University of Kerala, Kasaragod - 671316,\ Kerela,\ India.\ Email: shijintv11@gmail.com}
			Soorya P \footnote{\small  Department of Mathematics, Central University of Kerala, Kasaragod - 671316,\ Kerela,\ India.\ Email: sooryap2017@gmail.com}
			Shahul Hameed K \footnote{\small  Department of
			Mathematics, K M M Government\ Women's\ College, Kannur - 670004,\ Kerala,  \ India.  E-mail: shabrennen@gmail.com}
			Germina K A \footnote{\small  Department of Mathematics, Central University of Kerala, Kasaragod - 671316,\ Kerela,\ India.\ Email: srgerminaka@gmail.com}
			
		}
\end{center}
\newcommand\spec{\operatorname{Spec}}
\thispagestyle{empty}
\begin{abstract}
A signed graph is an ordered pair $\Sigma=(G,\sigma),$ where $G=(V,E)$ is the underlying graph of $\Sigma$ with a signature function $\sigma:E\rightarrow \{1,-1\}$. 
Notion of signed distance and distance compatible signed graphs are introduced in \cite{sdist}. In this article, first we characterize the distance compatibilty in the case of a connected signed graph and discussed the distance compatibility criterion for the cartesian product, lexicographic product and tensor product of signed graphs. We also deal with the distance matrix of the cartesian product, lexicographic product and tensor product of signed graphs in terms of the distance matrix of the factor graphs. 
\end{abstract}

\textbf{Key Words:} Signed graph, Signed distance matrix, Distance compatibility, Product of signed graphs.

\textbf{Mathematics Subject Classification (2010):}   05C12, 05C22, 05C50, 05C76.
\section{Introduction}
Unless otherwise mentioned, in this paper we consider only simple, finite and connected graphs and signed graphs. A signed graph $\Sigma=(G,\sigma)$ is an underlying graph $G=(V,E)$ with a signature function $\sigma:E\rightarrow \{1,-1\}$. We call a signed graph $\Sigma$ as  balanced (or cycle balanced) if all of its cycles are positive, where the sign of a cycle is the product of the sign of its edges. The notion of signed distance for signed graphs and distance compatible signed graphs are introduced in ~\cite{sdist}. In this paper,  first we characterize distance compatible signed graphs and analyse these concepts for the product of signed graphs such as cartesian, lexicographic and tensor products. We get explicit formulae for distance matrices for these signed graph products.  

To begin with we borrow some definitions  and notations from ~\cite{sdist}. The sign of a path $P$ in $\Sigma$ is defined as $\sigma(P)=\prod_{e\in E(P)} \sigma(e)$.

Let $u$ and $v$ be two vertices in $\Sigma$. If $u$ and $v$ are adjacent then, we will deonte the adjacency by $u\sim v$. Let $d(u,v)$ denote the usual distance between $u$ and $v$ in the underlying graph and let $\mathcal{P}_{(u,v)}$ denote the collection of all shortest paths $P_{(u,v)}$ between them.  Then, the two types of (signed) distance between $u$ and $v$ in a signed graph is defined in \cite{sdist} as:

$d_{\max}(u,v) = \sigma_{\max}(u,v) d(u,v)=\max\{\sigma(P_{(u,v)}): P_{(u,v)} \in \mathcal{P}_{(u,v)} \}d(u,v)$ and 

$d_{\min}(u,v) = \sigma_{\min}(u,v) d(u,v)=\min\{\sigma(P_{(u,v)}): P_{(u,v)} \in \mathcal{P}_{(u,v)} \}d(u,v).$

Two vertices $u$ and $v$ in a signed graph $\Sigma$ are said to be \emph{distance-compatible} (briefly, \emph{compatible}) \cite{sdist} if $d_{\min}{(u,v)}=d_{\max}{(u,v)}$. For any $u\sim v,$ $d_{\min}{(u,v)}=d_{\max}{(u,v)}.$ Also a signed graph $\Sigma$ is said to be (distance-) compatible if every pair of vertices is compatible and $\Sigma$ is incompatible, otherwise.

Corresponding to these distances, there are two types of distance matrices in a signed graph as given below.
\begin{defn}\cite{sdist} [Signed distance matrices] 

	\par(D1)  $D^{\max}(\Sigma)=(d_{\max}(u,v))_{n\times n}$.
	\par(D2)  $D^{\min}(\Sigma)=(d_{\min}(u,v))_{n\times n}$.
	
\end{defn}
We adopt the construction of a signed complete graph described in~\cite{sdist} obtained from the signed distance matrices $D^{\max}$ and $D^{\min}$ and call that signed complete graph as the associated signed complete graphs associated with $\Sigma$.
\begin{defn}\cite{sdist}
	The associated signed complete graph $K^{D^{\max}}(\Sigma)$ with respect to $D^{\max}(\Sigma)$ is obtained by  joining the non-adjacent vertices of $\Sigma$ with edges having signs 
	\begin{equation*}
	\sigma(uv)= \sigma_{\max}(uv)
	\end{equation*}
	
The associated signed complete graph $K^{D^{\min}}(\Sigma)$ with respect to $D^{\min}(\Sigma)$ is obtained by joining the non-adjacent vertices of $\Sigma$ with edges having signs 
	\begin{equation*}
	\sigma(uv)= \sigma_{\min}(uv)
	\end{equation*}
\end{defn}
Whenever, $D^{\max}=D^{\min}=D^\pm$, say, the associated signed complete graph of $\Sigma$ is denoted by $K^{D^\pm}$
\section{Characterization of compatible signed graphs}\label{mainsection}
In this section we characterize compatibile signed graphs. The following lemma which is used for proving the characterization is significant in its own right.
\begin{lem}\label{L1}
	
	Let $u$ and $v$ be incompatible pair of vertices with least distance in a $2$-connected non-geodetic signed graph. Then, there will be two internally disjoint shortest paths from $u$ to $v$ of opposite signs.
\end{lem}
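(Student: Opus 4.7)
The plan is a minimality-based contradiction argument, exploiting the multiplicativity of path signs over concatenation. By incompatibility of $(u,v)$, there exist a positive shortest $u$-$v$ path $P$ and a negative shortest $u$-$v$ path $Q$. I would in fact show the stronger statement that \emph{any} such pair $P,Q$ must be internally disjoint, which yields the lemma.

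Suppose for contradiction that $P$ and $Q$ share some internal vertex $w$. A standard property of shortest paths is that any subpath of a shortest path is itself a shortest path between its endpoints. Hence the $u$-$w$ segments $P_{uw}$ and $Q_{uw}$ are both shortest $u$-$w$ paths, and similarly the $w$-$v$ segments $P_{wv}$ and $Q_{wv}$ are both shortest $w$-$v$ paths. Since path sign is multiplicative on concatenation,
\[
\sigma(P)=\sigma(P_{uw})\,\sigma(P_{wv}),\qquad \sigma(Q)=\sigma(Q_{uw})\,\sigma(Q_{wv}).
\]
Because $\sigma(P)=+1\neq -1=\sigma(Q)$, these two products must differ in at least one factor; that is, either $\sigma(P_{uw})\neq \sigma(Q_{uw})$ or $\sigma(P_{wv})\neq \sigma(Q_{wv})$.

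In the first case, the pair $(u,w)$ admits two shortest paths of opposite signs and is therefore incompatible; in the second case, the same conclusion holds for $(w,v)$. But $w$ is an internal vertex of a shortest $u$-$v$ path, so $1\le d(u,w),\,d(w,v)<d(u,v)$, contradicting the choice of $(u,v)$ as an incompatible pair of least distance. Thus $P$ and $Q$ share no internal vertex, yielding two internally disjoint shortest paths of opposite signs.

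The argument is essentially short; the only point requiring care is the sub-case split and the observation that ``shortest paths with a common internal vertex'' naturally decomposes the sign discrepancy onto a strictly smaller pair. The hypotheses of $2$-connectivity and non-geodeticity do not enter the core reasoning — they serve to guarantee that incompatible pairs can exist in the first place (non-geodetic gives multiple shortest paths, $2$-connectivity avoids degenerate cut-vertex behaviour) — so no genuine obstacle is expected beyond the clean bookkeeping of the sign factorisation.
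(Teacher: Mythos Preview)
Your proof is correct and follows essentially the same minimality-plus-sign-factorisation strategy as the paper. The only cosmetic difference is that the paper enumerates \emph{all} common internal vertices $w_1,\dots,w_k$ and decomposes $P$ and $Q$ into segments between consecutive $w_i$ before locating a segment where the signs differ, whereas you take a single common internal vertex $w$ and split once; your version is a bit more streamlined, but the underlying idea is identical.
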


\begin{proof}
	Suppose that $\Sigma=(G,\sigma)$ is a $2$-connected non-geodetic signed graph. Let $u$ and $v$ be an incompatible pair of vertices with least distance, say, $d,$ in $G.$ Then, there exists two paths $P$ and $Q$ from $u$ to $v,$ where $\sigma(P(u,v))$ is positive and $\sigma(Q(u,v))$ is negative. We have to prove that $P$ and $Q$ are internally disjoint. If not, then there will be some vertices common to $P$ and $Q.$ Let $w_1,w_2,\dots, w_k$ be the common vertices in $P$ and $Q$ other than the end vertices and denote $u$ and $v$ as $w_0$ and $w_{k+1}$ respectively. Restrict the paths $P$ and $Q$ from $w_i$ to $w_{i+1}$ as $P_i$ and $Q_i$ for $i=0,1,2, \dots k.$ Then, $P=P_0 \cup P_1 \cup P_2 \cup \dots \cup P_k$ and $Q=Q_0\cup Q_1 \cup Q_2\cup \dots \cup Q_k $. All such $P_i$ and $Q_i$ should be of same order, otherwise if for some $i,$ $|P_i|<|Q_i|$ implies, through that $P_i,$ we can find a shortest path from $u$ to $v$ of length less than $k.$ Also, if all $P_i$ and $Q_i$ are of same sign, then $\sigma(P)=\prod_{i=0}^{k}\sigma( P_i)$ and $\sigma(Q)=\prod_{i=0}^{k}\sigma( Q_i)$ will be of same sign, a contradiction. Therefore, from the collection $\{(w_i,w_{i+1}): 0\leq i\leq k\},$ we can find atleast one pair $(w_j,w_{j+1}),$ having two path $P_j$ and $Q_j$ of same length, say, $l$ and opposite sign. If there is any path from $w_j$ to $w_{j+1}$ of length less than $l$ through that path we can find a path from $u$ to $v$ of length less than $k,$ a contradiction. Therefore, $P_j$ and $Q_j$ will form shortest paths from  $w_j$ to $w_{j+1}$ of length $l$ and of opposite sign. Thus,  $d_{\max}(w_j,w_{j+1})\ne d_{\min}(w_j,w_{j+1}).$ Therefore, $w_j$ and $w_{j+1}$ will be an incompatible pair in $\Sigma$ with distance less than $d,$ which is a contradiction. Hence, $P$ and $Q$ will be internally disjoint.
	
\end{proof}
\begin{thm}
	A $2$-connected non-geodetic signed graph is incompatible if and only if it has an even negative cycle $C_{2k}$ such that there exists two diametrically opposite vertices $u$ and $v$ on $C_{2k}$ in such a way that there are no shortest paths between $u$ and $v$ of length less than $k.$
\end{thm}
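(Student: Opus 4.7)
The plan is to derive both directions essentially from Lemma \ref{L1}, with the forward direction being the substantive one and the converse a routine unpacking of definitions.

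For the forward direction, suppose $\Sigma$ is a $2$-connected non-geodetic signed graph that is incompatible. I would pick an incompatible pair $u,v$ of minimum distance in $\Sigma$, say $d(u,v)=k$. Lemma \ref{L1} immediately supplies two internally disjoint shortest $u$--$v$ paths $P$ and $Q$ of opposite sign. Their union $C:=P\cup Q$ is a cycle of length $2k$ whose sign is $\sigma(P)\sigma(Q)=-1$, so $C=C_{2k}$ is an even negative cycle. On $C$ the vertices $u$ and $v$ are antipodal (each arc has length $k$), hence diametrically opposite, and since $d(u,v)=k$ in $\Sigma$ there cannot be any $u$--$v$ path of length less than $k$. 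This delivers exactly the cycle and vertex pair required by the theorem.

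For the converse, assume the existence of an even negative cycle $C_{2k}$ in $\Sigma$ together with diametrically opposite vertices $u,v$ on $C_{2k}$ admitting no $u$--$v$ path in $\Sigma$ of length less than $k$. The cycle $C_{2k}$ decomposes into two $u$--$v$ arcs $P_1$ and $P_2$ of length $k$ each. By the hypothesis on the absence of shorter paths, $d(u,v)=k$, so both $P_1$ and $P_2$ are shortest $u$--$v$ paths. Since $\sigma(P_1)\sigma(P_2)=\sigma(C_{2k})=-1$, the two arcs carry opposite signs, which forces $\sigma_{\max}(u,v)=+1$ and $\sigma_{\min}(u,v)=-1$. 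Therefore $d_{\max}(u,v)=k\ne-k=d_{\min}(u,v)$, so the pair $(u,v)$ is incompatible and hence $\Sigma$ itself is incompatible.

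I do not anticipate a serious obstacle: the real work is already embedded in Lemma \ref{L1}, and once the two internally disjoint shortest paths of opposite sign are in hand the construction of the certifying cycle is immediate. The only points requiring care are minor: verifying that ``diametrically opposite'' on $C_{2k}$ is used in the natural sense (the two arcs have equal length $k$), noting that the minimality of $d(u,v)$ among incompatible pairs is what guarantees the ``no shorter path'' condition on $C_{2k}$, and observing that the hypotheses of $2$-connectedness and non-geodeticity enter only through the invocation of Lemma \ref{L1} in the forward direction.
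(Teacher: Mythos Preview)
Your proposal is correct and follows essentially the same argument as the paper: both directions proceed exactly as you describe, invoking Lemma~\ref{L1} on a minimum-distance incompatible pair for the forward implication and reading off the two oppositely-signed arcs of $C_{2k}$ for the converse. Your write-up is in fact slightly more explicit than the paper's in justifying the ``no shorter path'' clause in the forward direction and in checking that the two arcs are genuine shortest paths in the converse.
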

\begin{proof}
	Suppose that $\Sigma=(G,\sigma)$ is incompatible. Among all the pair of incompatible vertices in $\Sigma,$ let $u$ and $v$ be a pair with shortest distance, say, $k,$ in $G.$ Then, by Lemma \ref{L1} there exists two internally disjoint paths $P$ and $Q$ of order $k$ from $u$ to $v,$ and without loss of generality assume that $\sigma(P(u,v))$ is positive and $\sigma(Q(u,v))$ is negative. Therefore, the union of $P$ and $Q$ will be the cycle $C_{2k}$ in $\Sigma,$ infact a negative cycle with $u$ and $v$ as diametrically opposite vertices. 	
	
	Conversely, suppose that $\Sigma $ has an even negative cycle $C_{2k}$ such that there exists two diametrically opposite vertices $u$ and $v$ on $C_{2k}$ and there are no shortest path between $u$ and $v$ of length less than $k.$ Consider all the shortest paths from $u$ to $v$ of length $k.$ Since $u$ and $v$ are diametrically oposite,  there are two $uv$-paths of opposite sign and of length $k$ in $C_{2k}^-.$ Thus, $u$ and $v$ will be an incompatible pair of vertices in $\Sigma.$
\end{proof}

\section{Compatibility of the product of two signed graphs }
The following discussions mainly deal with the compatibility conditions and the distance matrix of some of the signed graph products for which we first recall the definitions of these products.
\begin{defn}\cite{scart}
	The cartesian product $\Sigma_1\times \Sigma_2$ of two signed graphs $\Sigma_1=(G_1,\sigma_1)$ and $\Sigma_2=(G_2,\sigma_2)$ is defined as the signed graph with vertex set and edge set are that of the cartesian product of the underlying unsigned graphs and the signature function $\sigma$ for the labeling of the edges is defined by\\
	$\sigma((u_i,v_j),(u_k,v_l))=
	\begin{cases}
	\sigma_1(u_i,u_k), \mbox{ if } j=l\\
	\sigma_2(v_j,v_l),  \mbox{ if } i=k
	\end{cases}
	$
	
\end{defn}

\begin{defn}\cite{scomp}
	The \textit{lexicographic product} $\Sigma_1[\Sigma_2]$ (also called composition) of two signed graphs $\Sigma_1=(V_1,E_1,\sigma_1)$ and $\Sigma_2=(V_2,E_2,\sigma_2)$ as the signed graph $(V_1\times V_2,E,\sigma)$ where the edge set is that of the lexicographic product of underlying unsigned graphs and the signature function $\sigma$ for the labeling of the edges is defined by\\

	$\sigma((u_i,v_j)(u_k,v_l))=
	\left\{
	\begin{array}{ll}
	\sigma_1(u_iu_k)  & \mbox{if } i\neq k \\
	\sigma_2(v_jv_l) & \mbox{if } i=k
	\end{array}
	\right.$ 
\end{defn}

The tensor product $G_1\otimes G_2$ of two graphs $G_1$ and $G_2$ is a graph with vertex set $V(G_1)\times V(G_2),$ in which two vertices $(u_1,u_2)$ and $(v_1,v_2)$ are adjacent if and only if $u_1$ is adjacent to $v_1$ in $G_1$ and $u_2$ is adjacent to $v_2$ in $G_2.$ The tensor product of two signed graphs is given in \cite{stens} as follows.

\begin{defn}\cite{stens}
	Let $\Sigma=(G,\sigma)$ be a signed graph, $\Sigma$ is called \textit{tensor product} of two signed graphs $\Sigma_1=(G_1,\sigma_1)$ and $\Sigma_2=(G_2,\sigma_2)$, i.e. $\Sigma=\Sigma_1\otimes \Sigma_2$ if $G=G_1\otimes G_2$ and for an edge $(u_1,u_2)(v_1,v_2)$ of $G,$ 
	
	$\sigma((u_1,u_2)(v_1,v_2))=\sigma_1(u_1,v_1)\sigma_2(u_2,v_2).$ 

\end{defn} 
Now we deal with the distance compatibility criterion of these products one by one. In the following theorems, the notation $d_{\Sigma}(u,v)$ for a compatible signed graph denotes $d_{\max}(u,v)=d_{\min}(u,v)$.
\begin{thm}\label{cart}
	The cartesian product  $\Sigma_1\times \Sigma_2$ is compatible if and only if $\Sigma_1$ and $\Sigma_2$ are compatible.
\end{thm}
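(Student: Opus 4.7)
The backbone of the argument is the standard structural fact that in a cartesian product $G_1\times G_2$, the distance decomposes as
\[
d\bigl((u_1,v_1),(u_2,v_2)\bigr)=d_{G_1}(u_1,u_2)+d_{G_2}(v_1,v_2),
\]
and every shortest path between $(u_1,v_1)$ and $(u_2,v_2)$ is an interleaving of a shortest $u_1$-to-$u_2$ path in $G_1$ (traversed in the first coordinate) with a shortest $v_1$-to-$v_2$ path in $G_2$ (traversed in the second coordinate). I would invoke this fact (or give a one-line proof: any deviation from a shortest path in either factor strictly lengthens the overall walk). From the definition of the product signature, each edge of the interleaved path contributes the sign of the unique factor edge to which it corresponds. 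Hence the sign of any shortest path from $(u_1,v_1)$ to $(u_2,v_2)$ equals $\sigma(P_1)\,\sigma(P_2)$, where $P_1$ and $P_2$ are the shortest paths in $\Sigma_1$ and $\Sigma_2$ that it projects onto, independent of the interleaving order.

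For sufficiency, assume both $\Sigma_1$ and $\Sigma_2$ are compatible. Given any two vertices $(u_1,v_1),(u_2,v_2)$ of $\Sigma_1\times\Sigma_2$, compatibility of the factors means that every shortest $u_1$-to-$u_2$ path in $\Sigma_1$ carries a common sign $s_1 = d_{\Sigma_1}(u_1,u_2)/d(u_1,u_2)$ and every shortest $v_1$-to-$v_2$ path in $\Sigma_2$ carries a common sign $s_2$. By the multiplicativity just noted, every shortest path in the product has sign $s_1 s_2$, so the pair is compatible.

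For necessity I would argue by contrapositive. Suppose, say, $\Sigma_1$ is incompatible, witnessed by a pair $u_1,u_2$ admitting two shortest paths of opposite sign. Fix an arbitrary vertex $v\in V(\Sigma_2)$ and consider the vertices $(u_1,v),(u_2,v)$ of $\Sigma_1\times\Sigma_2$. Since $d_{\Sigma_2}(v,v)=0$, the structural fact forces every shortest path between $(u_1,v)$ and $(u_2,v)$ to stay inside the copy $V(\Sigma_1)\times\{v\}$, so such paths are in signature-preserving bijection with the shortest $u_1$-$u_2$ paths in $\Sigma_1$. The two witnessing paths thus lift to two shortest paths of opposite sign in $\Sigma_1\times\Sigma_2$, so the product is incompatible. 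A symmetric argument handles the case when $\Sigma_2$ is incompatible.

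The only genuinely delicate step is the structural characterization of shortest paths in a cartesian product, but that is a classical (coordinate-projection) observation and once noted, the whole theorem reduces to the single multiplicative identity for signs. I would expect the write-up to be short and direct.
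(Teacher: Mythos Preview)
Your proposal is correct and follows essentially the same route as the paper: for necessity you lift an incompatible pair in a factor to the layer $V(\Sigma_1)\times\{v\}$ (the paper phrases this as ``$\Sigma_1$ is a subgraph of $\Sigma_1\times\Sigma_2$''), and for sufficiency you decompose each shortest product path into its $\Sigma_1$- and $\Sigma_2$-edge sets and use compatibility of the factors to conclude that $\sigma(P)=\sigma(P_1)\sigma(P_2)$ is uniquely determined. The only cosmetic blemish is the shorthand $s_1=d_{\Sigma_1}(u_1,u_2)/d(u_1,u_2)$, which is ill-defined when $u_1=u_2$; simply set $s_1=1$ in that degenerate case.
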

\begin{proof}
	Suppose that $\Sigma_1\times \Sigma_2$ is compatible. Without loss of generality assume that $\Sigma_1$ is incompatible. Let $u_i$ and $u_j$ be an incompatible pair of vertices in $\Sigma_1.$ Also, $\Sigma_1$ being a subgraph of $\Sigma_1\times \Sigma_2,$ it is possible to find incompatible pairs of the form  $(u_i,v_k)$ and $(u_j,v_k)$ in $\Sigma_1\times \Sigma_2,$ a contradiction. Therefore, both $\Sigma_1$ and $\Sigma_2$ should be compatible.
	
	Conversely, suppose that $\Sigma_1$ and $\Sigma_2$ are compatible. Let $u=(u_1,u_2)$ and $v=(v_1,v_2)$ be any two vertices in $\Sigma_1\times \Sigma_2$ with $|d_{\Sigma_1}(u_1,v_1)|=d_1$ and $|d_{\Sigma_2}(u_2,v_2)|=d_2.$ Then, $|d_{\Sigma_1\times \Sigma_2}(u,v)|=d=d_1+d_2.$
	
	Let $u_1=x_0,x_1, \dots, x_{d_1}=v_1$ and $u_2=y_0,y_1,\dots, y_{d_2}=v_2$ be the shortest paths between $u_1$ and $v_1$ in $\Sigma_1$ and, $u_2$ and $v_2$ in $\Sigma_2$ respectively. Then, in any shortest path between $u$ and $v$ the first co-ordinate needs to travel $d_1$ steps and the second cordinate needs to travel $d_2$ steps.
	
	Let $S_1$ consists of all edges in $P$ that correspond to the $d_1$ steps which are part of $\Sigma_1$ and $S_2$ consists of all edges in $P$ that correspond to the $d_2$ steps which are part of $\Sigma_2$. Any shortest path between two vertices in $\Sigma_1\times \Sigma_2$ changes exactly one co-ordinate at any step. Then, for each $e\in S_1,$ the first co-ordinate of the end vertices will be of the form $x_i$ and $x_{i+1}$ and the second co-ordinate of the end vertices will be fixed. Similarlly, for each edge $e\in S_2,$ the second co-ordinate of the end vertices will be of the form $y_i$ and $y_{i+1}$ and the first co-ordinate of the end vertices will be fixed.
	Since, $S_1$ and $S_2$ contains all the edges in the shortest path from $u$ to $v,$ we can write $E(P)=S_1\cup S_2$. Thus, $\sigma(P(u,v))=\prod_{e\in S_1} \sigma(e) . \prod_{e\in S_2} \sigma(e).$
	
	Since, for each $e\in S_1,$ the second co-ordinate is fixed, $\sigma(e)$ will depend only on the first co-ordinate of the end vertices of $e.$\\
	Thus, 
	$\prod_{e\in S_1} \sigma(e)=\prod_{i=0}^{d_1-1}\sigma(x_ix_{i+1}).$
	
	That is, the product of the sign of all edges in $S_1$ will correspond to the sign of a shortest path connecting $x_0$ and $x_{d_1}.$ Since, $\Sigma_1$ is compatible the sign of all shortest paths from $x_0$ to $x_{d_1}$ will have unique sign, $\prod_{e\in S_1} \sigma(e)$ will be always unique.
	
	Similarly, $\prod_{e\in S_2} \sigma(e)=\prod_{i=0}^{d_2-1}\sigma(y_iy_{i+1}).$\\
	Since, $\Sigma_2$ is compatible, by using the above mentioned argument we can say that $\prod_{e\in S_2} \sigma(e)$ will be always unique.
	
	Thus, $\sigma(P(u,v))=\prod_{e\in S_1} \sigma(e) . \prod_{e\in S_2} \sigma(e)$ will be unique for any path $P$ connecting $u$ and $v.$
	
	That is, $u$ and $v$ are compatible pairs in $\Sigma_1\times \Sigma_2,$ for any two arbitrary vertices $u$ and $v.$ Hence, $\Sigma_1\times \Sigma_2$ is compatible.
	
\end{proof}

 \begin{thm}
	Let $\Sigma_1$ and $\Sigma_2$ be two signed graphs. Then $\Sigma_1[\Sigma_2]$ is compatible if and only if $\Sigma_1$ is compatible and $\Sigma_2$ is either all-positive or all-negative.
\end{thm}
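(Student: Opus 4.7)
My plan is to prove both directions in parallel with the cartesian case handled in Theorem~\ref{cart}. The essential preliminary is a structural description of shortest paths in $\Sigma_1[\Sigma_2]$ pulled back to the factors. When $u\neq u'$, the product distance satisfies $d_{\Sigma_1[\Sigma_2]}((u,v),(u',v'))=d_{\Sigma_1}(u,u')$; every shortest $(u,v)$-$(u',v')$ path has the form $(x_0,y_0),(x_1,y_1),\dots,(x_k,y_k)$ where $x_0 x_1\cdots x_k$ is a shortest $u$-$u'$ path in $\Sigma_1$ and the intermediate $y_i$ are arbitrary, and its sign is $\sigma_1(x_0 x_1\cdots x_k)$. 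When $u=u'$ and $u$ has a neighbor in $\Sigma_1$, the distance equals $\min\{2,d_{\Sigma_2}(v,v')\}$, and any length-two shortest path either passes through a common neighbor $v''$ of $v,v'$ in $\Sigma_2$ (sign $\sigma_2(vv'')\sigma_2(v''v')$) or detours as $(u,v),(u'',\ast),(u,v')$ for any $\Sigma_1$-neighbor $u''$ of $u$ (sign $\sigma_1(uu'')^2=+1$).

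For the ($\Leftarrow$) direction, assume $\Sigma_1$ is compatible and $\sigma_2\equiv\varepsilon\in\{+1,-1\}$ throughout $\Sigma_2$. When $u\neq u'$, uniqueness of the shortest-path sign in $\Sigma_1$ transfers directly to the product. When $u=u'$, the direct-edge option (when it exists) has sign $\varepsilon$, every common-neighbor length-two option has sign $\varepsilon^2=+1$, and every detour through $\Sigma_1$ has sign $+1$; a short case analysis on whether the distance is $1$ or $2$ confirms a common sign for all shortest paths, so $\Sigma_1[\Sigma_2]$ is compatible.

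For the ($\Rightarrow$) direction I prove the contrapositive in two sub-cases. If $\Sigma_1$ has an incompatible pair $u_i,u_j$, then fixing any $v_0\in V_2$ the structural description above matches the signs of shortest $(u_i,v_0)$-$(u_j,v_0)$ paths in the product with those of shortest $u_i$-$u_j$ paths in $\Sigma_1$, so $(u_i,v_0),(u_j,v_0)$ is incompatible in $\Sigma_1[\Sigma_2]$. If instead $\Sigma_2$ contains edges of both signs, connectedness of $G_2$ yields a vertex $v''$ incident to a positive edge $v''v$ and a negative edge $v''v'$; picking any $u\in V_1$ with a $\Sigma_1$-neighbor $u''$, the length-two paths $(u,v),(u,v''),(u,v')$ and $(u,v),(u'',\ast),(u,v')$ carry opposite signs, witnessing incompatibility of $(u,v),(u,v')$.

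The main obstacle is the last step: those length-two paths are genuinely shortest only when $vv'\notin E(G_2)$. If the sign change in $\Sigma_2$ is confined to a region where every two vertices involved are already adjacent (for example a signed triangle with sign pattern $+,+,-$), the direct edge $(u,v)(u,v')$ dominates and the witness collapses. Handling this gap rigorously is the delicate point of the proof; I would try to propagate the sign change along a longer $G_2$-geodesic to produce a non-adjacent pair $v,v'$ admitting a signed length-two walk of the required type, or else invoke a supplementary hypothesis on $\Sigma_2$ to exclude the degenerate configuration.
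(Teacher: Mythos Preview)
Your overall strategy matches the paper's: the same case split on whether the first coordinates agree, and for the $(\Rightarrow)$ direction the same witness---a vertex $v''$ of $\Sigma_2$ incident to edges of both signs, together with the two length-$2$ paths from $(u,v)$ to $(u,v')$ through $(u,v'')$ and through a $\Sigma_1$-neighbour $(u'',\ast)$. One small difference: to conclude that $\Sigma_1$ is compatible the paper invokes $\Sigma_1\times\Sigma_2\subseteq\Sigma_1[\Sigma_2]$ together with Theorem~\ref{cart}, while you project shortest paths directly to $\Sigma_1$; your route is cleaner, since compatibility does not in general pass to spanning subgraphs.

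The obstacle you isolate in your last paragraph is genuine, and the paper's proof does not address it either: it simply declares $P_1$ and $P_2$ to be ``two shortest paths'' without checking that $v_1\nsim v_3$. In fact the gap cannot be repaired, because the statement is false as written. Take $\Sigma_1$ to be any compatible signed graph on at least two vertices (an all-positive $K_2$ or $P_3$ will do) and let $\Sigma_2$ be a signed $K_3$ with two positive edges and one negative edge. Every pair of vertices sharing a first coordinate is then adjacent in $\Sigma_1[\Sigma_2]$, so all within-fibre distances equal $1$; distances across fibres are governed entirely by $\Sigma_1$ and inherit its compatibility. Hence $\Sigma_1[\Sigma_2]$ is compatible while $\Sigma_2$ is neither all-positive nor all-negative. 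Your idea of propagating the sign change along a longer $G_2$-geodesic cannot help here, since $G_2$ has diameter $1$; what is really needed is the supplementary hypothesis you allude to (for instance, that $G_2$ be triangle-free, which forces $v\nsim v'$ whenever $v\sim v''\sim v'$ and makes the witness construction go through).
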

\begin{proof}
	Suppose that $\Sigma_1[\Sigma_2]$ is compatible. As $\Sigma_1\times \Sigma_2$ is a subgraph of $\Sigma_1[\Sigma_2],$ by Theorem \ref{cart}, $\Sigma_1$ and $\Sigma_2$ are compatible. Suppose, if possible further that, $\Sigma_2$ contains at least one negative and one positive edge. Then, it is possible to find vertices $v_1, v_2$ and $v_3$ in $\Sigma_2$ such that the edges $v_1v_2$ and $v_2v_3$ are incident and having positive and negative sign respectively. Also, let $u_1u_2$ be an edge in $\Sigma_1.$ Consider the vertices $(u_1,v_1)$ and $(u_1,v_3)$ in $\Sigma_1[\Sigma_2].$ Then, consider the two shortest paths $P_1:(u_1,v_1)(u_1,v_2)(u_1,v_3)$ and $P_2:(u_1,v_1)(u_2,v_1)(u_1,v_3)$ from $(u_1,v_1)$ to $(u_1,v_3).$ The sign of these two paths are calculated as follows.\\
	$\sigma(P_1)=\sigma(v_1v_2)\sigma(v_2v_3),$  is negative.\\
	$\sigma(P_2)=\sigma(u_1u_2)\sigma(u_2u_1),$ is always positive.\\
	That is, $P_1$ and $P_2$ are two shortest paths from $(u_1,v_1)$ to $(u_1,v_3)$ in $\Sigma_1[\Sigma_2]$ with opposite signs. Hence, $(u_1,v_1)$ and $(u_1,v_3)$ form an incompatible pair in $\Sigma_1[\Sigma_2],$ a contradiction. Therefore, $\Sigma_2$ should be either all-positive or all-negative.
	
	Conversely, suppose that $\Sigma_1$ is compatible and $\Sigma_2$ is all-positive or all-negative.  Let $(u_i,v_k)$ and $(u_j,v_l)$ be any two vertices in $\Sigma_1[\Sigma_2].$ We have the following cases to deal with.
	
	\textbf{Case 1}: If $u_i\neq u_j$ 
	
Suppose that the shortest path $P$ from $(u_i,v_k)$ to $(u_j,v_l)$ has distance $d(u_i,u_j)=k.$ Then, $P$ should have $k$ edges and the sign of the edges will depend only on the sign of the first co-ordinates. Also, the first co-ordinates of an edge in $P$ will be adjacent vertices in $\Sigma_1.$ Therefore, sign of $P$ will be same as that of sign of the shortest path between $u_i$ and $u_j$ in $\Sigma_1.$ Since, $\Sigma_1$ is compatible, sign of any shortest path between $u_i$ and $u_j$ in $\Sigma_1$ will be unique. Therefore, sign of the shortest path between $(u_i,v_k)$ and $(u_j,v_l)$ in $\Sigma_1[\Sigma_2]$ is unique.

	\textbf{Case 2}: If $u_i=u_j$
	
	 Then, the shortest path between $(u_i,v_k)$ and $(u_i,v_l)$ moves either horizondally in $\Sigma_1[\Sigma_2],$ say, $P_1$ or the path which moves from $(u_i,v_k)$ to $(u_j,v_m),$ and then it moves to $(u_i,v_l),$ where $u_i \sim u_j$ $\Sigma_1,$ say, $P_2.$ In $P_1,$ since the first co-ordinate is fixed the sign of an edge will depends only on the second co-ordinates that are edges in $\Sigma_2.$ Since, the sign of the edges in $\Sigma_2$ are either all positive or all negative, and $d(v_l,v_k)=2,$ the sign of $P_1$ will be positive. Also, the sign of $P_2$ is 
	$\sigma(P_2)=\sigma(u_iu_j)\sigma(u_ju_i),$ always positive. Therefore, $(u_i,v_k)$ and $(u_i,v_l)$ are compatible. Hence, the converse follows.
	
\end{proof}
The following lemma gives necessary and sufficient condition for the connectedness of a tensor product of graphs.
	\begin{lem}[\cite{wch}]\label{ten}
	For connected graphs $G_1$ and $G_2,$ the tensor product graph $G_1 \otimes G_2$ is connected if and only if either $G_1$ or $G_2$ contains an odd cycle.	
\end{lem}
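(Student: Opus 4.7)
The plan is to prove the two directions separately, using the standard bipartition obstruction for one direction and a walk-length-matching argument for the other.

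For the necessity (if $G_1\otimes G_2$ is connected then one factor contains an odd cycle), I would argue the contrapositive. Suppose neither factor has an odd cycle, i.e.\ both are bipartite with bipartitions $V(G_1)=A_1\cup B_1$ and $V(G_2)=A_2\cup B_2$. Consider the partition of $V(G_1)\times V(G_2)$ into $S=(A_1\times A_2)\cup(B_1\times B_2)$ and its complement $\bar S=(A_1\times B_2)\cup(B_1\times A_2)$. By the definition of the tensor product, an edge $(u_1,u_2)(v_1,v_2)$ requires $u_1\sim v_1$ in $G_1$ and $u_2\sim v_2$ in $G_2$, so the two coordinates jump across the respective bipartitions simultaneously. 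A direct case check shows every edge lies entirely in $S$ or entirely in $\bar S$; since $G_1$ and $G_2$ are each connected with at least one edge, both $S$ and $\bar S$ are nonempty, hence $G_1\otimes G_2$ is disconnected.

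For the sufficiency, assume without loss of generality that $G_1$ contains an odd cycle. The key observation I would use is the walk lift\,/\,project principle: there is a walk of length $k$ from $(a,b)$ to $(c,d)$ in $G_1\otimes G_2$ if and only if there is a walk of length $k$ from $a$ to $c$ in $G_1$ \emph{and} a walk of length $k$ from $b$ to $d$ in $G_2$. So the task is to produce, for arbitrary $(a,b)$ and $(c,d)$, some common length $k$ admitting walks in both factors.

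I would then invoke two elementary parity facts about connected graphs: in a connected graph $H$, between any two vertices $x,y$ walks exist of every sufficiently large length of the parity of $d_H(x,y)$; and if $H$ additionally contains an odd cycle, then walks exist of every sufficiently large length of \emph{both} parities (traverse the odd cycle once to flip parity). Applying this to $G_2$, there is $N_2$ such that walks of length $k$ from $b$ to $d$ exist for all $k\ge N_2$ of parity $\varepsilon:=d_{G_2}(b,d)\bmod 2$. Applying the stronger statement to $G_1$ (since it has an odd cycle), there is $N_1$ such that walks of length $k$ from $a$ to $c$ exist for all $k\ge N_1$, of either parity. Pick any $k\ge\max(N_1,N_2)$ with $k\equiv\varepsilon\pmod 2$; the lift principle then yields a walk in $G_1\otimes G_2$ joining $(a,b)$ to $(c,d)$.

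The main obstacle is really the parity accounting: one has to make sure the odd cycle in $G_1$ can actually be used to flip the parity of an $a$--$c$ walk (which requires first walking from $a$ to a vertex on the odd cycle, going around it, and returning), and that the length lower bounds $N_1,N_2$ can be chosen uniformly enough to be simultaneously satisfiable. Once the walk-parity lemma is established cleanly, the rest of the argument is a short bookkeeping step.
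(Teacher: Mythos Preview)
Your argument is correct and is essentially Weichsel's original proof: the bipartition $S=(A_1\times A_2)\cup(B_1\times B_2)$ versus $\bar S$ is exactly the classical obstruction, and the walk-lift principle together with the odd-cycle parity flip is the standard route to sufficiency. There is nothing to compare against, however, because the paper does not supply its own proof of this lemma; it is quoted from \cite{wch} as a known result and used as a black box to justify the connectedness hypothesis before discussing compatibility of $\Sigma_1\otimes\Sigma_2$.

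One minor remark on your write-up: in the necessity direction you assert that $S$ and $\bar S$ are both nonempty ``since $G_1$ and $G_2$ are each connected with at least one edge''. Strictly speaking this uses that each factor has at least two vertices (so that both parts of each bipartition are nonempty); the degenerate case $G_i\cong K_1$ should be excluded or handled separately, as the paper's standing convention of connected graphs does not by itself rule it out. This is cosmetic and does not affect the substance of the proof.
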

The formula for computing distance between two vertices in the  tensor product of two graphs is derived in \cite{tdist}. For a graph $G$ and two vertices $u,v \in V(G),$ the odd distance $od_G(u,v)$ is the length of the shortest odd walk joining $u$ and $v$ in $G,$ and the even distance $ed_G(u,v)$ is the length of the shortest even walk joining $u$ and $v$ in $G.$ If no walk of odd (even) length exists between $u$ and $v,$ then $od_G(u,v)= \infty $ ($ed_G(u,v)=\infty$).
\begin{lem}[\cite{tdist}]\label{2}
	Let $G$ and $H$ be two connected graphs, and let $u=(u_1,u_2), v=(v_1,v_2) \in V(G)\times V(H).$  Then,\\
	$d_{G\otimes H}(u,v)=\min\{\max\{od_G(u_1,v_1), od_H(u_2,v_2)\},\max\{ed_G(u_1,v_1),ed_H(u_2,v_2)\}\}.$ 
\end{lem}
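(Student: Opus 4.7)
The plan is to reduce the distance computation in $G\otimes H$ to a question about walks of prescribed length in the factors. I would first observe that, directly from the definition of the tensor product, a walk of length $k$ in $G\otimes H$ from $(u_1,u_2)$ to $(v_1,v_2)$ is the same data as a pair consisting of a walk of length $k$ in $G$ from $u_1$ to $v_1$ and a walk of length $k$ in $H$ from $u_2$ to $v_2$. Since the distance in any graph equals the length of its shortest walk between the two endpoints, this reduces the problem to finding the smallest $k$ for which walks of length $k$ with the correct endpoints exist simultaneously in $G$ and in $H$.

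Next I would characterize, for a connected graph $G$ and vertices $u_1,v_1$, the set of integers $k$ admitting a $u_1$--$v_1$ walk of length $k$. The claim is that such a walk exists if and only if ($k$ is odd and $k\geq od_G(u_1,v_1)$) or ($k$ is even and $k\geq ed_G(u_1,v_1)$). The forward direction is immediate from the definitions of $od_G$ and $ed_G$; for the backward direction, one extends a shortest odd or even walk by repeatedly traversing an incident edge back and forth, raising the length by $2$ at a time while preserving parity.

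Combining these two observations, a walk of length $k$ between $(u_1,u_2)$ and $(v_1,v_2)$ exists in $G\otimes H$ if and only if either $k$ is odd with $k\geq\max\{od_G(u_1,v_1),od_H(u_2,v_2)\}$, or $k$ is even with $k\geq\max\{ed_G(u_1,v_1),ed_H(u_2,v_2)\}$. Taking the minimum over the two cases produces the claimed formula, using that the maximum of two odd (respectively even) integers has the same parity and that the minimum of the resulting two candidate lengths is achieved by an actual walk in each branch.

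The main subtlety is the treatment of infinite odd or even distances: when, say, $G$ is bipartite with $u_1,v_1$ in the same bipartition class, $od_G(u_1,v_1)=\infty$ and the ``odd'' branch of the minimum is vacuous. The formula still returns the correct value, and Lemma \ref{ten} guarantees that the overall tensor product distance is finite precisely when at least one of $G$ or $H$ contains an odd cycle; the bookkeeping of these infinite values is where a careless argument would go wrong, so I would be explicit about the conventions $\max\{\cdot,\infty\}=\infty$ and $\min\{\cdot,\infty\}=\,\cdot$ at the start of the proof.
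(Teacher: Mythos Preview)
Your argument is correct and is the standard proof of this distance formula. Note, however, that the paper does not give its own proof of this lemma: it is quoted verbatim from \cite{tdist} and used as a black box in the subsequent theorem on compatibility of tensor products. So there is no in-paper argument to compare against; your write-up simply supplies what the paper omits by citation, and the approach you take (projecting walks in $G\otimes H$ to synchronized walks in the factors, then characterizing the attainable walk-lengths in each factor via $od$ and $ed$) is exactly the one used in the cited source.
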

Let $\Sigma_1$ and $\Sigma_2$ be two signed graphs. Taking cue from Lemma \ref{ten}, we need to assume that one of them contains an odd cycle for discussing the connected tensor product. In the following theorem, we get one way implication for the compatibility of the tensor product of $\Sigma_1$ and $\Sigma_2$ and we strongly believe that the converse is also true, the proof of which evades us for the time being.

\begin{thm}
	If the connected tensor product  $\Sigma_1\otimes \Sigma_2$ is compatible then, both $\Sigma_1$ and $\Sigma_2$ are compatible.
\end{thm}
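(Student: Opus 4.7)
The plan is to prove the contrapositive: assume, without loss of generality, that $\Sigma_1$ is incompatible, and construct an incompatible pair in $\Sigma_1\otimes\Sigma_2$. Pick an incompatible pair $u_1,v_1$ in $\Sigma_1$ and let $d=d(u_1,v_1)$ be their distance in the underlying graph; by the definition of incompatibility there exist two shortest $u_1v_1$-paths $P$ and $Q$ in $\Sigma_1$ of length $d$ with $\sigma(P)=-\sigma(Q)$. The guiding observation is that any walk in $\Sigma_1\otimes\Sigma_2$ projects coordinatewise onto walks of the same length in the two factors, and the sign of the tensor walk is the product of the signs of the two projections. So it suffices to lift $P$ and $Q$ to two distinct length-$d$ paths in $\Sigma_1\otimes\Sigma_2$ sharing a common second-coordinate walk $W_2$, with endpoints chosen so that these lifts actually realize the tensor distance between those endpoints.

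Since $\Sigma_1\otimes\Sigma_2$ is connected, $\Sigma_2$ contains at least one edge $u_2v_2$, and $W_2$ will be taken to be the walk of length $d$ that bounces back and forth along this edge. I split into two cases according to the parity of $d$. If $d$ is even, set $W_2=u_2,v_2,u_2,\dots,u_2$, a closed walk at $u_2$, and consider the pair $((u_1,u_2),(v_1,u_2))$ in $\Sigma_1\otimes\Sigma_2$. Applying Lemma \ref{2}, with $ed_{\Sigma_1}(u_1,v_1)=d$ and $ed_{\Sigma_2}(u_2,u_2)=0$, immediately forces the tensor distance to equal $d$. Pairing $W_2$ with $P$ and with $Q$ then gives two distinct shortest paths in the tensor product whose signs are $\sigma(P)\cdot\sigma_2(u_2v_2)^d=\sigma(P)$ and $\sigma(Q)$, which are opposite. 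If instead $d$ is odd, take $W_2=u_2,v_2,u_2,\dots,v_2$ and the pair $((u_1,u_2),(v_1,v_2))$; then $od_{\Sigma_1}(u_1,v_1)=d$ and $od_{\Sigma_2}(u_2,v_2)=1$, so Lemma \ref{2} again yields tensor distance $d$, and the two lifts have signs $\sigma(P)\sigma_2(u_2v_2)$ and $\sigma(Q)\sigma_2(u_2v_2)$, again opposite.

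The main subtlety is applying Lemma \ref{2} carefully to confirm that the tensor distance is exactly $d$ and not smaller; the parity split is forced by the fact that the minimum in the formula is realized through the even-walk branch when $d$ is even and through the odd-walk branch when $d$ is odd, while the opposite parity gives something strictly larger than $d$ in $\Sigma_1$. A minor bookkeeping point is to verify that each lift is a genuine path (no repeated vertices) in $\Sigma_1\otimes\Sigma_2$, which is automatic because the first-coordinate sequence inherits distinctness from $P$ or $Q$. By the symmetry of the tensor product in its two arguments, the same construction applied to $\Sigma_2$ completes the proof.
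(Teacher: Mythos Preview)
Your proof is correct and follows essentially the same route as the paper's: both argue by contrapositive, lift two oppositely-signed shortest $u_1v_1$-paths in $\Sigma_1$ along a back-and-forth walk on a single edge of $\Sigma_2$, split into the same parity cases, and invoke Lemma~\ref{2} to certify that the lifts realize the tensor distance. Your write-up is in fact a bit more careful than the paper's in verifying the distance computation and in noting that the lifts are genuine paths.
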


\begin{proof}
	Suppose that the connected tensor product $\Sigma_1\otimes \Sigma_2$ is compatible. Without loss of generality assume that $\Sigma_1$ is incompatible. Let $u_0$ and $u_k$ be an incompatible pair of vertices in $\Sigma_1$ with distance $k.$ Then, there exists two shortest paths $P :u_0u_1u_2 \dots u_{k-1}u_k$ and $Q :u_0u._1u'_2 \dots u'_{k-1}u_k$ from $u_0$ to $u_k,$ where $\sigma(P)$ is positive and $\sigma(Q)$ is negative. Consider the following cases.

	\textbf{Case 1}: When $k$ is even.
	
	Then, $P$ will have even number of positive edges and even number of negative edges and $Q$ will have odd number of positive edges and odd number of negative edges.
	
	Let $vv'$ be an edge in $\Sigma_2.$ Consider the  following paths between the vertices $(u_0,v)$ and $(u_k,v)$ in $\Sigma_1\otimes \Sigma_2.$\\	
	$P' : (u_0,v) (u_1,v') \dots (u_{k-1},v') (u_k,v) \\ Q' : (u_0,v) (u'_1,v') \dots (u'_{k-1},v') (u_k,v).$
	
	These two paths are of order $k,$ and also by using Lemma \ref{2}, the shortest distance between $(u_0,v)$ and $(u_k,v)$ is	$\min\{\max\{l>k, 0\},\max\{k,0\}\}=k.$ Thus, $P'$ and $Q'$ will be two shortest paths from $(u_0,v)$ to $(u_k,v).$
	
	Then, $\sigma(P')=\sigma(u_0u_1)\sigma(vv')\sigma(u_1u_2)\sigma(vv') \dots \sigma(u_{k-2}u_{k-1}) \sigma(vv')\sigma(u_{k-1}u_k)\sigma(v'v)$ and $\sigma(Q')=\sigma(u_0u'_1)\sigma(vv')\sigma(u'_1u'_2)\sigma(vv') \dots \sigma(u'_{k-2}u'_{k-1}) \sigma(vv')\sigma(u'_{k-1}u'_k)\sigma(v'v).$
	
	If $\sigma(vv')$ is positive, then $\sigma(P')=\sigma(P)$ and $\sigma(Q')=\sigma(Q).$
	Also, if $\sigma(vv')$ is negative, then $\sigma(P')=\sigma(P)$ and $\sigma(Q')=\sigma(Q),$ since $\sigma(vv')$ will appear even number of times in each paths.
	
	\textbf{Case 2}: When $k$ is odd.
	
	Here, $P$ will have odd number of positive edges and even number of negative edges and $Q$ will have even number of positive edges and odd number of negative edges.
	
	Let $vv'$ be an edge in $\Sigma_2.$ Consider the following paths between the vertices $(u_0,v)$ and $(u_k,v')$ in $\Sigma_1\otimes \Sigma_2.$\\	
	$P'' : (u_0,v) (u_1,v') \dots (u_{k-1},v) (u_k,v') \\ Q'' : (u_0,v) (u'_1,v') \dots (u'_{k-1},v) (u_k,v').$
	
	These two paths are of order $k,$ and also by using Lemma \ref{2}, we get the shortest distance between $(u_0,v)$ and $(u_k,v')$ is $\min\{\max\{k, 1\},\max\{l>k,l'>1\}\}=k.$ Thus, $P''$ and $Q''$ are two shortest paths from $(u_0,v)$ to $(u_k,v').$
	
	Then, $\sigma(P'')=\sigma(u_0u_1)\sigma(vv')\sigma(u_1u_2)\sigma(vv') \dots \sigma(u_{k-2}u_{k-1}) \sigma(vv')\sigma(u_{k-1}u_k)\sigma(vv')$ and $\sigma(Q'')=\sigma(u_0u'_1)\sigma(vv')\sigma(u'_1u'_2)\sigma(vv') \dots \sigma(u'_{k-2}u'_{k-1}) \sigma(vv')\sigma(u'_{k-1}u'_k)\sigma(vv').$
	
	If $\sigma(vv')$ is positive, then $\sigma(P'')=\sigma(P)$ and $\sigma(Q'')=\sigma(Q).$
	If $\sigma(vv')$ is negative, then $\sigma(P'')=-\sigma(P)$ and $\sigma(Q'')=-\sigma(Q).$
	
	Therefore, corresponding to an incompatible pair in $\Sigma_1,$ we can find an incompatible pair in $\Sigma_1\otimes \Sigma_2,$ a contradiction to our assumption that $\Sigma_1\otimes \Sigma_2$ is compatible. Hence, $\Sigma_1$ and $\Sigma_2$ should be compatible.
	
\end{proof}

As mentioned above, we strongly believe that the converse of the above theorem is also true and we place it as a conjecture.
\begin{conj}
If  $\Sigma_1$ and $\Sigma_2$ are two compatible signed graphs, then the connected tensor product  $\Sigma_1\otimes \Sigma_2$ is compatible.
\end{conj}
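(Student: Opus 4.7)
The natural strategy mirrors the ``only if'' direction. Take arbitrary vertices $(u_1,u_2), (v_1,v_2) \in V(\Sigma_1\otimes\Sigma_2)$ and any shortest path $P$ between them of length $d=d_{G_1\otimes G_2}((u_1,u_2),(v_1,v_2))$. Projection onto the factor coordinates yields walks $W_i$ in $\Sigma_i$ from $u_i$ to $v_i$, each of length exactly $d$, and since the edge signature of the tensor product factorises, $\sigma(P)=\sigma_1(W_1)\sigma_2(W_2)$. Lemma \ref{2} expresses $d$ as $\min\{\max\{od_1,od_2\},\max\{ed_1,ed_2\}\}$; a case split on which term is the minimum fixes the parity of $d$ and hence of both walks $W_i$.

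Granted that, the whole argument would reduce to the following auxiliary walk-sign lemma: in a compatible signed graph $\Sigma$, once the endpoints $u,v$ and a length $l$ (of the correct parity, with $l\ge od_\Sigma(u,v)$ or $l\ge ed_\Sigma(u,v)$) are fixed, every walk of length $l$ from $u$ to $v$ has the same sign. If this held, then $\sigma_1(W_1)$ and $\sigma_2(W_2)$ would depend only on $(u_i,v_i,d)$, $\sigma(P)$ would be independent of the choice of shortest path, and the compatibility of $\Sigma_1\otimes\Sigma_2$ would follow.

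The decisive obstacle is that this walk-sign lemma is simply false: compatibility constrains shortest paths but imposes nothing on longer walks, and in particular nothing on closed walks at a vertex, which is exactly what one encounters in the tensor product when $u_i=v_i$ in a factor. Concretely, let $\Sigma_1=K_4$ on $\{1,2,3,4\}$ with $\sigma_1(12)=-1$ and every other edge $+1$, and let $\Sigma_2=K_2$ on $\{v,w\}$ be positively signed. Both factors are trivially compatible (every pair in $K_4$ is adjacent, and $\Sigma_2$ has a single edge), and the tensor product is connected since $K_4$ has odd cycles. However, the three triangles $123,124,134$ through vertex $1$ in $\Sigma_1$ have signs $-,-,+$ respectively. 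One checks that $d_{G_1\otimes G_2}((1,v),(1,w))=3$, with shortest paths exactly of the form $(1,v)\to(a,w)\to(b,v)\to(1,w)$ for distinct $a,b\in\{2,3,4\}$; the sign of such a path equals $\sigma_1(1a)\sigma_1(ab)\sigma_1(b1)$, which is the sign of the triangle $1ab$ and takes both values $\pm1$ as $(a,b)$ varies. Hence $(1,v)$ and $(1,w)$ form an incompatible pair, and $\Sigma_1\otimes\Sigma_2$ is incompatible despite both factors being compatible. So the conjecture as stated is false; the correct output of this exercise is this counterexample, and any salvage would have to impose an extra hypothesis on the factors (for instance, balance) to eliminate closed-walk sign ambiguity.
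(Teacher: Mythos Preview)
The paper offers no proof of this statement: it is explicitly recorded as a conjecture, the authors writing that ``the proof of which evades us for the time being.'' Your submission does not prove the conjecture either---it refutes it, and correctly so.

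Your counterexample is valid. With $\Sigma_1=(K_4,\sigma_1)$ where only the edge $12$ is negative, and $\Sigma_2=(K_2,+)$, both factors are compatible (in $K_4$ every pair is adjacent, so shortest paths are unique), and $K_4$ contains triangles so the tensor product is connected by Lemma~\ref{ten}. Applying Lemma~\ref{2} to the pair $(1,v),(1,w)$ gives $od_{K_4}(1,1)=3$, $od_{K_2}(v,w)=1$, $ed_{K_4}(1,1)=0$, $ed_{K_2}(v,w)=\infty$, hence distance $3$. Every shortest path has the form $(1,v)(a,w)(b,v)(1,w)$ with $a,b\in\{2,3,4\}$ distinct, and its sign is $\sigma_1(1a)\sigma_1(ab)\sigma_1(b1)$, the sign of the triangle $1ab$. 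Since triangle $134$ is positive while triangle $123$ is negative, the pair $(1,v),(1,w)$ is incompatible. This settles the conjecture in the negative.

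Your diagnostic is also the right one: compatibility controls signs of shortest paths in each factor, but shortest paths in the tensor product project to \emph{walks} in the factors, and compatibility says nothing about walk signs---in particular nothing about closed walks at a vertex, which is precisely where your example finds the obstruction. A natural sufficient condition that does go through is balance of both factors (every closed walk then has positive sign, so the projected walk signs depend only on endpoints and parity), which your final remark correctly anticipates.
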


\section{Distance matrices of compatible product of signed graphs}\label{section3}

To deal with the distance matrices of compatible product of signed graphs we use the Kronecker product of an $m\times n$ matrix $A=(a_{ij})$ and a $p\times q$ matrix $B$ which is defined to be the $mp\times nq$ matrix $A\otimes B=(a_{ij}B)$.

Let $\Sigma_1=(G_1,\sigma)$ and $\Sigma_2=(G_2,\sigma')$ be two compatible signed graphs with $|V(\Sigma_1)|=m$ and $|V(\Sigma_2)|=n$. Let $\sigma_{ij}$ and $\sigma'_{kl}$ be defined in $\Sigma_1$ and $\Sigma_2$ respectively, as follows. 
$$
\sigma_{ij} =
\begin{cases}
\sigma(P_{(u_i,u_j)}) &, \mbox{if } i\ne j ,\\
1 &, \mbox{ if } i=j\\

\end{cases}
$$

$$\sigma'_{kl} =
\begin{cases}
\sigma'(P_{(v_k,v_l)}) &, \mbox{if } k\ne l ,\\
1 &, \mbox{ if } k=l
\end{cases}
$$
Then,

	$ K^{D^\pm(\Sigma_1)}+I_m=\left(
	\begin{array}{ccccc}
	1 & \sigma_{12} & \dots & \sigma_{1m} \\
	\sigma_{21} & 1 & \dots & \sigma_{2m} \\
	\vdots & \vdots &\vdots & \vdots \\
	\vdots & \vdots &\vdots & \vdots \\
	\sigma_{m1} & \sigma_{m2} & \dots & 1 \\
	\end{array}
	\right)= \left(
\begin{array}{ccccc}
\sigma_{11} & \sigma_{12} & \dots & \sigma_{1m} \\
\sigma_{21} & \sigma_{22} & \dots & \sigma_{2m} \\
\vdots & \vdots &\vdots & \vdots \\
\vdots & \vdots &\vdots & \vdots \\
\sigma_{m1} & \sigma_{m2} & \dots & \sigma_{mm} \\
\end{array}
\right)$ 
	
Similarly,	$ K^{D^\pm(\Sigma_2)}+I_n=\left(
	\begin{array}{ccccc}
	1 & \sigma'_{12} & \dots & \sigma'_{1n} \\
	\sigma'_{21} & 1 & \dots & \sigma'_{2n} \\
	\vdots & \vdots &\vdots & \vdots \\
	\vdots & \vdots &\vdots & \vdots \\
	\sigma'_{n1} & \sigma'_{n2} & \dots & 1 \\
	\end{array}
	\right) =\left(
	\begin{array}{ccccc}
	\sigma'_{11} & \sigma'_{12} & \dots & \sigma'_{1n} \\
	\sigma'_{21} & \sigma'_{22} & \dots & \sigma'_{2n} \\
	\vdots & \vdots &\vdots & \vdots \\
	\vdots & \vdots &\vdots & \vdots \\
	\sigma'_{n1} & \sigma'_{n2} & \dots & \sigma'_{nn} \\
	\end{array}
	\right) $

\begin{thm}
	Let $\Sigma_1=(G_1,\sigma)$ and $\Sigma_2=(G_2,\sigma')$ be two compatible signed graphs where $|V(\Sigma_1)|=m$ and $|V(\Sigma_2)|=n.$ Then, the distance matrix of the cartesian product $\Sigma_1\times \Sigma_2$ is given by,\\
	$D(\Sigma_1\times \Sigma_2)=D(\Sigma_1)\otimes(K^{D^\pm(\Sigma_2)}+I_n)+(K^{D^\pm(\Sigma_1)}+I_m)\otimes D(\Sigma_2).$
\end{thm}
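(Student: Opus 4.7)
The plan is to verify the identity entrywise after identifying the vertex set of $\Sigma_1\times\Sigma_2$ with $\{1,\dots,m\}\times\{1,\dots,n\}$ in lexicographic order, so that every $mn\times mn$ matrix acquires an $m\times m$ block structure with $n\times n$ blocks. Under this convention, the $(i,j)$-block of a Kronecker product $A\otimes B$ (with $A$ of size $m\times m$) is $a_{ij}B$, and the $(k,l)$-entry of that block is $a_{ij}b_{kl}$.

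First I would record the signed distance in the product explicitly. By Theorem \ref{cart}, $\Sigma_1\times\Sigma_2$ is compatible, so $d_{\Sigma_1\times\Sigma_2}$ is well-defined. The underlying distance is $d((u_i,v_k),(u_j,v_l))=d(u_i,u_j)+d(v_k,v_l)$. Following the edge-partition argument in the proof of Theorem \ref{cart}, any shortest $((u_i,v_k),(u_j,v_l))$-path $P$ splits as $E(P)=S_1\sqcup S_2$, where $\prod_{e\in S_1}\sigma(e)$ equals the sign of a shortest $u_i$--$u_j$ path in $\Sigma_1$ (namely $\sigma_{ij}$, unambiguous by compatibility of $\Sigma_1$) and $\prod_{e\in S_2}\sigma(e)=\sigma'_{kl}$. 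Hence
\[
d_{\Sigma_1\times\Sigma_2}\bigl((u_i,v_k),(u_j,v_l)\bigr)=\sigma_{ij}\sigma'_{kl}\bigl(d(u_i,u_j)+d(v_k,v_l)\bigr).
\]
The extension to the cases $i=j$ or $k=l$ uses the conventions $\sigma_{ii}=1$ and $\sigma'_{kk}=1$, and these are the cases that justify the presence of the $+I_n$ and $+I_m$ terms in the statement.

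Next I would evaluate each summand on the right-hand side at position $(i,j),(k,l)$. For $D(\Sigma_1)\otimes(K^{D^\pm(\Sigma_2)}+I_n)$, the $(i,j)$-block is $\sigma_{ij}d(u_i,u_j)\bigl(K^{D^\pm(\Sigma_2)}+I_n\bigr)$, whose $(k,l)$-entry is $\sigma_{ij}\sigma'_{kl}d(u_i,u_j)$. For $(K^{D^\pm(\Sigma_1)}+I_m)\otimes D(\Sigma_2)$, the $(i,j)$-block is $\sigma_{ij}D(\Sigma_2)$, whose $(k,l)$-entry is $\sigma_{ij}\sigma'_{kl}d(v_k,v_l)$. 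Adding these yields $\sigma_{ij}\sigma'_{kl}\bigl(d(u_i,u_j)+d(v_k,v_l)\bigr)$, which matches the formula derived above.

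The step that deserves the most care is not a calculation but the well-definedness of the sign factorisation into $\sigma_{ij}\sigma'_{kl}$: one must confirm that this product does not depend on the particular shortest path used to compute it, and that it gives the correct answer in the degenerate cases $i=j$ (where no edges of $S_1$ appear and $\sigma_{ii}=1$ is needed to make the corresponding Kronecker block vanish in the right way) and $k=l$ (symmetrically). Both points follow directly from the compatibility of the factors and from the identity-summand correction in the Kronecker factors, so once the entrywise formula above is in hand the theorem is immediate.
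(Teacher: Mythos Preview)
Your proof is correct and follows essentially the same approach as the paper: both derive the entrywise formula $d_{\Sigma_1\times\Sigma_2}\bigl((u_i,v_k),(u_j,v_l)\bigr)=\sigma_{ij}\sigma'_{kl}\bigl(d(u_i,u_j)+d(v_k,v_l)\bigr)$ from the edge-partition argument of Theorem~\ref{cart}, then match it against the entries of the two Kronecker summands. The paper presents this via explicit block matrices $B_{i,j}=B_{i,j}'+B_{i,j}''$ rather than directly indexing entries, but the underlying computation is identical.
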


\begin{proof}
	Let $D(\Sigma_1)$ and $D(\Sigma_2)$ be the distance matrices of $\Sigma_1$ and $\Sigma_2$ respectively. Let $V(\Sigma_1)=\{u_1,u_2,\dots,u_m\}$ and $V(\Sigma_2)=\{v_1,v_2,\dots, v_n\}$. Suppose that $\Sigma_1\times \Sigma_2$ is compatible.  Let $\sigma_{ij}$ and $\sigma'_{kl}$ de defined in $\Sigma_1$ and $\Sigma_2$ respectively, as follows. 
	
	$$\sigma_{ij} =
	\begin{cases}
	\sigma(P_{(u_i,u_j)})  &, \mbox{if } i\ne j ,\\
	1 &, \mbox{ if } i=j\\
\end{cases}$$

$$\sigma'_{kl} =
\begin{cases}
\sigma'(P_{(v_k,v_l)})  &, \mbox{if } k\ne l ,\\
1 &, \mbox{ if } k=l
\end{cases}$$

Also the shortest path between two vertices $u_i$ and $u_j$ in $\Sigma_1$ and, $v_k$ and $v_l$ in $\Sigma_2$ are denoted by $d_{\Sigma_1}(i,j)$ and $d_{\Sigma_2}(k,l)$ respectively.
	Let $u=(u_i,u_j)$ and  $v=(v_k,v_l)$ be two vertices in $\Sigma_1\times\Sigma_2$. Then,
	\begin{equation*}
	d_{\Sigma_1\times \Sigma_2}(u,v)=\sigma_{ij}.\sigma'_{kl} (d(u_i,u_j)+d(v_k,v_l))
	=\sigma'_{kl}d_{\Sigma_1}(i,j)+\sigma_{ij}d_{\Sigma_2}(k,l)
	\end{equation*}
	
	Then, the distance matrix of $\Sigma_1\times\Sigma_2$ can be written in the form

	$D(\Sigma_1\times \Sigma_2) = \left(
	\begin{array}{ccccc}
	B_{1,1} & B_{1,2} & B_{1,3} & \dots & B_{1,m} \\
	B_{2,1} & B_{2,2} &B_{2,3} & \dots & B_{2,m} \\
	\vdots & \vdots & \vdots & \vdots & \vdots \\
	\vdots & \vdots & \vdots & \vdots & \vdots \\
	B_{m,1} & B_{m,2} & B_{m,3}  & \dots & B_{m,m} \\
	\end{array}
	\right)$\\
	
		where each block is $B_{i,j}$ is given by,
	
	$B_{i,j} =\\
	\left(
	\begin{array}{ccccc}
	\sigma'_{11}d_{\Sigma_1}(i,j)+\sigma_{ij}d_{\Sigma_2}(1,1) & \sigma'_{12}d_{\Sigma_1}(i,j)+\sigma_{ij}d_{\Sigma_2}(1,2) & \dots & \sigma'_{1n}d_{\Sigma_1}(i,j)+\sigma_{ij}d_{\Sigma_2}(1,n) \\
	\sigma'_{21}d_{\Sigma_1}(i,j)+\sigma_{ij}d_{\Sigma_2}(2,1) & \sigma'_{22}d_{\Sigma_1}(i,j)+\sigma_{ij}d_{\Sigma_2}(2,2) & \dots & \sigma'_{2n}d_{\Sigma_1}(i,j)+\sigma_{ij}d_{\Sigma_2}(2,n) \\
	\vdots & \vdots &\vdots & \vdots \\
	\vdots & \vdots &\vdots & \vdots \\
	\sigma'_{n1}d_{\Sigma_1}(i,j)+\sigma_{ij}d_{\Sigma_2}(n,1) & \sigma'_{n2}d_{\Sigma_1}(i,j)+\sigma_{ij}d_{\Sigma_2}(n,2) & \dots & \sigma'_{nn}d_{\Sigma_1}(i,j)+\sigma_{ij}d_{\Sigma_2}(n,n) \\
	\end{array}
	\right)$\\

	$B_{i,j}$ can be split into two matrices as $B_{i,j}'$ and $B_{i,j}'',$ given as

	$B_{i,j}' =
	\left(
	\begin{array}{ccccc}
	\sigma'_{11}d_{\Sigma_1}(i,j) & \sigma'_{12}d_{\Sigma_1}(i,j) & \dots & \sigma'_{1n}d_{\Sigma_1}(i,j) \\
	\sigma'_{21}d_{\Sigma_1}(i,j) & \sigma'_{22}d_{\Sigma_1}(i,j) & \dots & \sigma'_{2n}d_{\Sigma_1}(i,j) \\
	\vdots & \vdots &\vdots & \vdots \\
	\vdots & \vdots &\vdots & \vdots \\
	\sigma'_{n1}d_{\Sigma_1}(i,j) & \sigma'_{n2}d_{\Sigma_1}(i,j) & \dots & \sigma'_{nn}d_{\Sigma_1}(i,j) \\
	\end{array}
	\right)$\\

	That is, $B_{i,j}' =d_{\Sigma_1}(i,j)(K^{D^\pm(\Sigma_2)}+I_n)$
	
	$B_{i,j}'' =
	\left(
	\begin{array}{ccccc}
	\sigma_{ij}d_{\Sigma_2}(1,1) & \sigma_{ij}d_{\Sigma_2}(1,2) & \dots & \sigma_{ij}d_{\Sigma_2}(1,n) \\
	\sigma_{ij}d_{\Sigma_2}(2,1) & \sigma_{ij}d_{\Sigma_2}(2,2) & \dots & \sigma_{ij}d_{\Sigma_2}(2,n) \\
	\vdots & \vdots &\vdots & \vdots \\
	\vdots & \vdots &\vdots & \vdots \\
	\sigma_{ij}d_{\Sigma_2}(n,1) & \sigma_{ij}d_{\Sigma_2}(n,2) & \dots & \sigma_{ij}d_{\Sigma_2}(n,n) \\
	\end{array}
	\right)$\\

	That is, $B_{i,j}''=\sigma_{ij}(D(\Sigma_2))$

	Then, 	
	
	$ D(\Sigma_1\times \Sigma_2)=\left(
	\begin{array}{ccccc}
	d_{\Sigma_1}(1,1)(K^{D^\pm(\Sigma_2)}+I_n) & \dots  & d_{\Sigma_1}(1,m) (K^{D^\pm(\Sigma_2)}+I_n)\\
	d_{\Sigma_1}(2,1)(K^{D^\pm(\Sigma_2)}+I_n) & \dots  & d_{\Sigma_1}(2,m)(K^{D^\pm(\Sigma_2)}+I_n) \\
	\vdots & \vdots   &\vdots \\
	\vdots & \vdots  &\vdots\\
	d_{\Sigma_1}(m,1)(K^{D^\pm(\Sigma_2)}+I_n) & \dots  & d_{\Sigma_1}(m,m)(K^{D^\pm(\Sigma_2)}+I_n) \\
	\end{array}
	\right)$
	+

	$\left(
	\begin{array}{ccccc}
	\sigma_{11}D(\Sigma_2)  & \sigma_{12}D(\Sigma_2)& \dots & \sigma_{1n}D(\Sigma_2) \\
	\sigma_{21}D(\Sigma_2) & \sigma_{22}D(\Sigma_2)& \dots & \sigma_{2n}D(\Sigma_2) \\
	\vdots & \vdots &\vdots & \vdots \\
	\vdots & \vdots &\vdots & \vdots \\
	\sigma_{n1}D(\Sigma_2) & \sigma_{n2}D(\Sigma_2) & \dots & \sigma_{nn}D(\Sigma_2) \\
	\end{array}
	\right)$\\
	
	Thus, $D(\Sigma_1\times \Sigma_2)=D(\Sigma_1)\otimes(K^{D^\pm(\Sigma_2)}+I_n)+(K^{D^\pm(\Sigma_1)}+I_m)\otimes D(\Sigma_2).$
	
\end{proof}
\begin{thm}\label{lexmtx}
The distance matrix of the compatible lexicographic product of two signed graphs $\Sigma_1$ and $\Sigma_2$ is,

$D(\Sigma_1[\Sigma_2])=D(\Sigma_1)\otimes J_n
+I_m\otimes(2K^{{D^\pm}(\Sigma_2)}-A(\Sigma_2)).$
\end{thm}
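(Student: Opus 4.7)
The plan is to carry out the same block-matrix strategy as in the cartesian product theorem: partition $D(\Sigma_1[\Sigma_2])$ into an $m\times m$ array of $n\times n$ blocks indexed by ordered pairs of vertices of $\Sigma_1$, so that the $(i,j)$-block records the signed distances from $\{u_i\}\times V(\Sigma_2)$ to $\{u_j\}\times V(\Sigma_2)$, and then match this block decomposition term by term against $D(\Sigma_1)\otimes J_n+I_m\otimes(2K^{D^\pm(\Sigma_2)}-A(\Sigma_2))$.

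First I would compute $d_{\Sigma_1[\Sigma_2]}\bigl((u_i,v_k),(u_j,v_l)\bigr)$ case by case, using the standing hypotheses that $\Sigma_1$ is compatible and that $\Sigma_2$ is either all-positive or all-negative. If $u_i\neq u_j$, every shortest path has length $d_{G_1}(u_i,u_j)$ and only its first-coordinate edges contribute to the sign, so the signed distance equals $d_{\Sigma_1}(u_i,u_j)$ independently of $v_k,v_l$. If $u_i=u_j$, the signed distance is $0$ when $v_k=v_l$, it is $\sigma_2(v_kv_l)$ when $v_k\sim v_l$ in $\Sigma_2$, and otherwise it equals $+2$: every shortest path in this remaining subcase has length $2$ and is of one of two shapes, either horizontal through a common $\Sigma_2$-neighbour $v_m$ of $v_k$ and $v_l$ (giving $\sigma_2(v_kv_m)\sigma_2(v_mv_l)=+1$ by the uniform-sign hypothesis) or vertical through a neighbour $u_s$ of $u_i$ in $\Sigma_1$ (giving $\sigma_1(u_iu_s)\sigma_1(u_su_i)=+1$).

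Next I would read off the $(i,j)$-block of $D(\Sigma_1[\Sigma_2])$: for $i\neq j$ it is $d_{\Sigma_1}(i,j)J_n$, and for $i=j$ it is a matrix $M$ whose $(k,k)$-entries are $0$, whose $(k,l)$-entries with $v_k\sim v_l$ are $\sigma_2(v_kv_l)$, and whose remaining off-diagonal entries are $+2$. The first summand on the right-hand side, $D(\Sigma_1)\otimes J_n$, places $d_{\Sigma_1}(i,j)J_n$ in every $(i,j)$-block, which already accounts for every off-diagonal block and contributes $0$ on the diagonal. The second summand is block-diagonal, so the remaining task reduces to verifying the entrywise identity $M=2K^{D^\pm(\Sigma_2)}-A(\Sigma_2)$: the diagonal gives $0-0=0$, an edge $v_k\sim v_l$ gives $2\sigma_2(v_kv_l)-\sigma_2(v_kv_l)=\sigma_2(v_kv_l)$, and a non-adjacent pair gives $2\sigma_{\max}(v_k,v_l)-0=2$, using once more that the sign of any shortest path in $\Sigma_2$ is $+1$ under the uniform-sign hypothesis.

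The main obstacle is this last subcase, namely forcing every length-$2$ shortest path in the lex product between two vertices of the same $\Sigma_2$-copy to carry sign $+1$ and then reconciling this with the factor $2K^{D^\pm(\Sigma_2)}-A(\Sigma_2)$ in the closed form. The payoff is that both families of length-$2$ routes, horizontal through $\Sigma_2$ or vertical through $\Sigma_1$, decompose as a product of two equal signs, so both routes produce $+1$ and allow the Kronecker identification to close up cleanly; once this sign bookkeeping is in place, the remainder is routine block-matrix algebra mirroring the cartesian product proof.
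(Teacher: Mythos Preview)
Your block-partition strategy---indexing $D(\Sigma_1[\Sigma_2])$ as an $m\times m$ array of $n\times n$ blocks and treating the diagonal and off-diagonal blocks separately---is exactly the route the paper takes, and your case analysis of the signed distances in the product is correct and in fact more carefully justified than the paper's.

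There is, however, a genuine gap in your final identification. You claim that for a non-adjacent pair $v_k,v_l$ the $(k,l)$-entry of $2K^{D^\pm(\Sigma_2)}-A(\Sigma_2)$ equals $2\sigma_{\max}(v_k,v_l)=2$, ``using once more that the sign of any shortest path in $\Sigma_2$ is $+1$ under the uniform-sign hypothesis.'' That assertion is false when $\Sigma_2$ is all-negative: a shortest $v_k$--$v_l$ path in $\Sigma_2$ of odd length $d\geq 3$ then carries sign $(-1)^d=-1$, so $\sigma'_{kl}=-1$ and the right-hand side yields $-2$, not the $+2$ that you (correctly) computed for the signed distance in $\Sigma_1[\Sigma_2]$. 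Your sign bookkeeping for the two kinds of length-$2$ routes in the product was fine; what fails is identifying the resulting $+2$ with $2\sigma'_{kl}$ coming from $K^{D^\pm(\Sigma_2)}$. The paper's proof glosses over this same step---it writes the entry as $2\sigma(P_{(v_k,v_l)})$ and matches the diagonal block to $2K^{D^\pm(\Sigma_2)}-A(\Sigma_2)$ without further comment---so the discrepancy is worth flagging: the stated formula needs $\sigma'_{kl}=+1$ for every non-adjacent pair in $\Sigma_2$, which is guaranteed when $\Sigma_2$ is all-positive (or all-negative of diameter at most $2$), but not for an all-negative $\Sigma_2$ of diameter $\geq 3$ containing a non-adjacent pair at odd distance.
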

\begin{proof}
	Let $\Sigma_1$ and $\Sigma_2$  be two signed graphs with $|V(\Sigma_1)|=m$ and $|V(\Sigma_2)|=n.$ Suppose that $\Sigma_1[\Sigma_2]$ is compatible. The distance between two vertices $u=(u_i,v_k) $ and $v=(u_j,v_l)$ in $\Sigma_1[\Sigma_2]$ will be as follows.

$$d_{\Sigma_1[\Sigma_2]}(u,v)=\begin{cases}
d_{\Sigma_1}(u_i,u_j),  & \mbox{if } u_i \ne u_j ,\\
1\sigma(v_k,v_l),& \mbox{if } u_i=u_j \mbox{ and } v_k \sim v_l \\
2\sigma(P_{(v_k,v_l)}),& \mbox{if } u_i=u_j \mbox{ and } v_k \nsim v_l \\
\end{cases}$$

The distance matrix of $\Sigma_1[\Sigma_2]$ can be written in the form

	$D= \left(
\begin{array}{ccccc}
B_{1,1} & B_{1,2} & B_{1,3} & \dots & B_{1,m} \\
B_{2,1} & B_{2,2} &B_{2,3} & \dots & B_{2,m} \\
\vdots & \vdots & \vdots & \vdots & \vdots \\
\vdots & \vdots & \vdots & \vdots & \vdots \\
B_{m,1} & B_{m,2} & B_{m,3}  & \dots & B_{m,m} \\
\end{array}
\right)$\\
with,

$B_{i,j}= \left(
\begin{array}{ccccc}
d((u_i,v_1),(u_j,v_1)) & d((u_i,v_1),(u_j,v_2)) & \dots & d((u_i,v_1),(u_j,v_n))\\
d((u_i,v_2),(u_j,v_1)) & d((u_i,v_2),(u_j,v_2)) & \dots & d((u_i,v_2),(u_j,v_n)) \\
\vdots & \vdots & \vdots \\
\vdots & \vdots & \vdots  \\
d((u_i,v_n),(u_j,v_1)) & d((u_i,v_n),(u_j,v_2)) & \dots & d((u_i,v_n),(u_j,v_n))\\
\end{array}
\right)$\\

Whenever $u_i=u_j,$ $$d((u_i,v_k),(u_i,v_l)) =\begin{cases}

1\sigma (v_k,v_l), & \mbox{ if } v_k \sim v_l \\
2\sigma (P_{(v_k,v_l)}), & \mbox{ if }  v_k \nsim v_l \\
0, & \mbox{ if }  v_k= v_l
\end{cases}$$

That implies, when $u_i=u_j,$ $B_{i,j}$ is nothing but $2K^{D^\pm(\Sigma_2)}-A(\Sigma_2).$ Thus, the diagonal blocks of $D$ will be $2K^{D^\pm(\Sigma_2)}-A(\Sigma_2).$

Also, whenever $u_i\ne u_j,$ $d((u_i,v_k),(u_j,v_l)) =
d_{\Sigma_1}(u_i,u_j).$

Then, $B_{i,j}= \left(
\begin{array}{ccccc}
d_{\Sigma_1}(u_i,u_j) & d_{\Sigma_1}(u_i,u_j) & \dots & d_{\Sigma_1}(u_i,u_j)\\
d_{\Sigma_1}(u_i,u_j) & d_{\Sigma_1}(u_i,u_j) & \dots & d_{\Sigma_1}(u_i,u_j) \\
\vdots & \vdots & \vdots & \vdots \\
\vdots & \vdots & \vdots & \vdots \\
d_{\Sigma_1}(u_i,u_j) & d_{\Sigma_1}(u_i,u_j) & \dots & d_{\Sigma_1}(u_i,u_j)\\
\end{array}
\right)$\\

Thus, the distance matrix of the compatible lexicographic product  $\Sigma_1[\Sigma_2]$ is
$D(\Sigma_1[\Sigma_2])=D(\Sigma_1)\otimes J_n
+I_m\otimes(2K^{{D^\pm}(\Sigma_2)}-A(\Sigma_2)).$

\end{proof}

\section{Distance spectra of some compatible signed graphs}
In this section we briefly discuss the distance spectra of some compatible signed graphs.

\begin{defn}
	Let $\Sigma$ be a compatible signed graph and $D(\Sigma)=(d_{ij})_{n\times n}$ be the distance matrix of $\Sigma,$ then the distance characteristic polynomial of $\Sigma$ is defined as $f(D(\Sigma),\lambda) = \det(\lambda I - D(\Sigma)),$ where $I$ is the identity matrix of order $n.$ 
	
	The roots of the characteristic equation $f(D(\Sigma),\lambda) = 0,$ denoted by $\lambda_1, \lambda_2, \dots, \lambda_n$ are called the distance eigenvalues of $\Sigma.$ If the distinct eigenvalues of $D(\Sigma)$ are  
	$\lambda_1 \geq \lambda_2 \geq \dots \geq \lambda_k$ and their multiplicities are $m_1, m_2, \dots, m_k,$ respectively, then the distance spectrum of $\Sigma$ is denoted by $\begin{pmatrix}
	\lambda_1  & \lambda_2 & \dots & \lambda_k\\
	m_1 & m_2 & \dots & m_k
	\end{pmatrix}.$
	
\end{defn}
The net-degree of a vertex $v$ in $\Sigma$ is $d^{\pm}_\Sigma(v)=d^+_\Sigma(v)-d^-_\Sigma(v)$ where, $d^+_\Sigma(v)$ is the number of positive edges and $d^-_\Sigma(v)$ is the number of negative edges incident with the vertex $v.$ A signed graph $\Sigma$ is said to be net-regular if every vertex has constant net-degree. The Petersen graph $+P$ is net-regular with net-degree $3$.

The distance-regular graphs with diameter $2$ are very special, and form a subject of their own. The well-known Petersen graph is a distance-regular graph with diameter $2$. 
While studying balance on signed Petersen graphs, T. Zaslavsky \cite{tzpet} proved that though there are $2^{15}$ ways to put signs on the edges of the Petersen graph $P$, in many respects only six of them are essentially different. He proved that 
\begin{thm}[\cite{tzpet}]
	There are precisely  six isomorphism types of minimal signed Petersen graph: $+P, P_1, P_{2,2}, P_{3,2}, P_{2,3}$, and $P_{3,3}$ . Each one is the unique minimal isomorphism type in its switching isomorphism class.
\end{thm}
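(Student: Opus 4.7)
The plan is to count switching isomorphism classes on the Petersen graph $P$ via a two-step reduction. Recall that for a connected graph $G$ with $n$ vertices and $m$ edges, two signatures are switching equivalent if and only if they assign the same sign to every cycle; consequently switching classes on $G$ are in bijection with homomorphisms $Z_1(G;\mathbb{F}_2)\to\mathbb{F}_2$, giving $2^{m-n+1}$ classes in total. For $P$ this yields $2^{15-10+1}=2^{6}=64$ switching classes.

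Two signed Petersen graphs are switching isomorphic precisely when their switching classes lie in the same orbit of $\mathrm{Aut}(P)\cong S_{5}$, a group of order $120$. I would therefore enumerate these orbits directly using the standard model $V(P)=\binom{[5]}{2}$ with $S_{5}$ acting naturally. Choosing a cycle basis built from pentagons of $P$ (on which $\mathrm{Aut}(P)$ is known to be transitive) lets one represent each switching class by an $\mathbb{F}_2$-valued function on the basis and track the induced $S_5$-action on $\mathbb{F}_2^{6}$.

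To isolate the six orbits I would employ numerical invariants that are constant on switching isomorphism classes, such as the number of negative pentagons, the number of negative $6$-cycles, and the frustration index $\ell(\Sigma)$ (the minimum number of negative edges in the switching class). These should separate $+P$ (frustration $0$), $P_{1}$ (frustration $1$), and the remaining candidates $P_{2,2},P_{2,3},P_{3,2},P_{3,3}$, with the subscripts encoding a pair of such invariants. A Burnside/orbit-counting check verifying that the six orbit sizes sum to $64$ then rules out hidden orbits.

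Finally I would establish uniqueness of the minimum-weight signature within each switching class, up to signed-graph isomorphism. For each of the six candidates the approach is to write down the set $N$ of negative edges at the minimum and verify that any other minimum-weight signature in the same switching class has its negative-edge set in the $\mathrm{Aut}(P)$-orbit of $N$. The main obstacle is precisely this uniqueness combined with the orbit enumeration: a switching class may \emph{a priori} admit several minimum-weight signatures, and one must check that they are all related by a graph automorphism. A conceptual reduction using the transitive $S_{5}$-action on pentagons of $P$, together with the highly symmetric edge orbits of $P$, should make the bookkeeping tractable and deliver exactly the six listed minimal types.
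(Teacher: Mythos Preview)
The paper does not prove this theorem at all: it is quoted verbatim from Zaslavsky~\cite{tzpet} and used only as background for computing distance spectra of the six signed Petersen graphs. There is therefore no ``paper's own proof'' to compare your attempt against.

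That said, your outline is a reasonable sketch of how Zaslavsky actually establishes the result in~\cite{tzpet}: one counts the $2^{6}=64$ switching classes, groups them into orbits under $\mathrm{Aut}(P)\cong S_5$, separates the orbits by switching-invariant quantities (Zaslavsky uses the frustration index $\ell$ together with structural data about the minimum negative-edge set; in his notation $P_{\ell,k}$ the first subscript is the frustration index and the second encodes how the $\ell$ negative edges sit in $P$), and then checks that the minimum-weight representative in each class is unique up to automorphism. Your description of the subscripts as ``a pair of such invariants'' is a bit vague---they are not, for instance, counts of negative pentagons and hexagons---but the overall strategy matches. As a proof, however, what you have written remains a plan rather than an argument: the Burnside count, the separation of the four classes with $\ell\in\{2,3\}$, and the uniqueness of the minimal signature in each class all require explicit verification that you have only gestured at.
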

Since, the shortest path between any two pair of vertices is unique (such graphs are called geodetic), signed Petersen graphs are always compatible. As Petersen graph is an important object in graph theory, we study the distance spectrum of these six signed Petersen graphs.  
Listed below are the distance characteristic polynomial of the six isomorphism types of minimal signed Petersen graph.

\begin{enumerate}
	\item $f(D^{\pm}(+P),\lambda)=\lambda^{10}-135\lambda^8-1080\lambda^7-3645\lambda^6-5832\lambda^5-3645\lambda^4.$
	\item $f(D^{\pm}(P_1),\lambda)=\lambda^{10}-135\lambda^8-504\lambda^7+2851\lambda^6+15688\lambda^5-5229\lambda^4-122256 \lambda^3-157680 \lambda^2.$
	\item $f(D^{\pm}(P_{2,2}),\lambda)=\lambda^{10}-135\lambda^8-216\lambda^7+5587\lambda^6+13648\lambda^5-77957\lambda^4-220888 \lambda^3+243912 \lambda^2+645984\lambda-308880.$
	\item 	$f(D^{\pm}(P_{2,3}),\lambda)=\lambda^{10}-135\lambda^8-184\lambda^7+6211\lambda^6+13720\lambda^5-111981\lambda^4-295840 \lambda^3+690800 \lambda^2+196800\lambda.$
	
	\item $f(D^{\pm}(P_{3,2}),\lambda)=\lambda^{10}-135\lambda^8+40\lambda^7+6675\lambda^6-4848\lambda^5-140725\lambda^4+195240 \lambda^3+986040 \lambda^2-2613600\lambda+1724976.$
	
	\item $f(D^{\pm}(P_{3,3}),\lambda)=\lambda^{10}-135\lambda^8-120\lambda^7+6435\lambda^6+6696\lambda^5-145725\lambda^4-126000 \lambda^3+1620000 \lambda^2+800000\lambda-7200000.$

\end{enumerate}

The graph with integral spectrum is of special interest in literature, as such, it is noticed that among the six signed Petersen graphs only  the all-positive Petersen graph $+P$ and $P_{3,3} \simeq -P$ have integral distance spectrum. Also, the eigenspace of $P$ and $-P$ corresponding to its distance eigenvalues are the same. The spectral values  of these two signed Petersen graphs are discussed below.

The distance matrix of the Petersen graph $+P$ can be represented as
$D^{\pm}(+P)=2J_{10}-2I_{10}-A(+P),$ where the adjacency spectrum of $+P$ is $\begin{pmatrix}
3  & 1 & -2\\
1 & 5 & 4
\end{pmatrix}.$ Hence, the distance spectrum of $+P$ is $\begin{pmatrix}
15  & 0 & -3\\
1 & 4 & 5
\end{pmatrix}.$

Also, the distance matrix of the Petersen graph $-P$ can be represented as
$D^{\pm}(-P)=2J_{10}-2I_{10}+3A(-P),$ where the adjacency spectrum of $-P$ is $\begin{pmatrix}
2 & -1 &-3	\\
4 & 5 & 1
\end{pmatrix}.$
The distance spectrum of $-P$ is $\begin{pmatrix}
9  & 4 &-5 \\
1 & 4 & 5
\end{pmatrix}.$

We end our discussion with a special case of lexicographic product $\Sigma[K_2^{\pm}]$ and compute its distance eigenvalues. First we require a preliminary lemma which is given below. 
\begin{lem}[\cite{kron}]
	If $A$ and $B$ are square matrix of order $m$ and $n$ respectively, then $A\otimes B$ is a square matrix of order $mn.$ Also, $(A\otimes B)(C\otimes D)=AC \otimes BD,$ if the products $AC$ and $BD$ exists.
\end{lem}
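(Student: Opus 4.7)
The first claim is immediate from the definition of the Kronecker product. By definition, $A\otimes B = (a_{ij}B)$ is an $m\times m$ array of blocks, each of which is an $n\times n$ matrix (since $B$ is $n\times n$). Therefore its overall dimensions are $mn\times mn$, so it is square of order $mn$. I would state this in one line and move on.

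For the mixed-product identity $(A\otimes B)(C\otimes D)=AC\otimes BD$, the plan is to verify equality block-by-block. Write $A=(a_{ij})$, $C=(c_{ij})$, and view $A\otimes B$ and $C\otimes D$ as block matrices whose $(i,j)$ blocks are $a_{ij}B$ and $c_{ij}D$, respectively. For the product to make sense at the block level one needs $AC$ to be well defined (so the block column count of $A\otimes B$ matches the block row count of $C\otimes D$) and $BD$ to be well defined (so the ordinary matrix multiplication of the block entries is legal); both of these are the hypothesis of the lemma. Under block matrix multiplication, the $(i,j)$ block of $(A\otimes B)(C\otimes D)$ is
\[
\sum_{k} (a_{ik}B)(c_{kj}D) \;=\; \Bigl(\sum_{k} a_{ik}c_{kj}\Bigr)BD \;=\; (AC)_{ij}\, BD,
\]
which is precisely the $(i,j)$ block of $AC\otimes BD$. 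Since this holds for every $i,j$, the two matrices agree entrywise.

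There is essentially no obstacle here beyond careful bookkeeping: the only subtlety is to justify that one is entitled to use block multiplication, which requires checking that the block sizes on either side match under the assumption that $AC$ and $BD$ exist. Once this is verified, the computation above is a direct algebraic identity. I would therefore present the proof in roughly half a page, emphasizing the block decomposition and the one-line computation of the $(i,j)$ block.
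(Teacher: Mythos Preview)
Your proof is correct and is the standard block-wise verification of the mixed-product property. Note, however, that the paper does not actually prove this lemma: it is quoted from the cited reference \cite{kron} and stated without proof, so there is no paper argument to compare against. Your write-up would serve perfectly well as a self-contained justification if one were desired.
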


\begin{thm}
	Let $\Sigma_1=(G,\sigma_1)$ be a compatible signed graph  and $\Sigma_2=(K_2,\sigma_2)$. If the distance eigenvalues of $\Sigma_1$ are $\lambda_1\geq \lambda_2\geq \dots \geq\lambda_m$. Then, $D(\Sigma_1[\Sigma_2])$ has eigenvalues, \\
$(1)$	$2\lambda_i+1$, for $1\leq i\leq m$ (each of multiplicity one) and $-1$ (of multiplicity $m$), 
	if $K_2$ is positive.\\
$(2)$	$2\lambda_i-1$, for $1\leq i\leq m$ (each of multiplicity one) and $1$ (of multiplicity $m$), if $K_2$ is negative.
\end{thm}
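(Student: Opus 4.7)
The plan is to apply Theorem \ref{lexmtx} to the case $n=2$, simplify the second summand, and then diagonalize by choosing a basis of common eigenvectors for the two $2\times 2$ blocks.

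First I would note that, since $\Sigma_1$ is compatible and $K_2^\pm$ is either all-positive or all-negative, the earlier characterization ensures that $\Sigma_1[\Sigma_2]$ is compatible, so Theorem \ref{lexmtx} applies. With $n=2$ the formula reads
\[
D(\Sigma_1[\Sigma_2]) \;=\; D(\Sigma_1)\otimes J_2 \;+\; I_m\otimes\bigl(2K^{D^\pm(\Sigma_2)}-A(\Sigma_2)\bigr).
\]
Because $K_2$ has only one pair of vertices, which is already adjacent, the associated signed complete graph $K^{D^\pm(K_2^\sigma)}$ coincides with $K_2^\sigma$ itself, and the $2\times 2$ matrix $2K^{D^\pm(\Sigma_2)}-A(\Sigma_2)$ collapses to $A(K_2^\sigma)$. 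Thus
\[
D(\Sigma_1[K_2^\sigma]) \;=\; D(\Sigma_1)\otimes J_2 \;+\; I_m\otimes A(K_2^\sigma).
\]

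The key observation is that $J_2$ and $A(K_2^\sigma)$ share a common orthonormal eigenbasis, namely $y_1=\tfrac{1}{\sqrt 2}(1,1)^{T}$ and $y_2=\tfrac{1}{\sqrt 2}(1,-1)^{T}$: one has $J_2 y_1 = 2y_1$, $J_2 y_2 = 0$, while $A(K_2^+)y_1 = y_1$, $A(K_2^+)y_2=-y_2$ and $A(K_2^-)y_1=-y_1$, $A(K_2^-)y_2=y_2$. Let $x_1,\dots,x_m$ be eigenvectors of $D(\Sigma_1)$ with eigenvalues $\lambda_1\ge\cdots\ge\lambda_m$. Using the mixed-product rule $(A\otimes B)(x\otimes y)=(Ax)\otimes(By)$, each $x_i\otimes y_j$ is a simultaneous eigenvector of both summands, hence of the full distance matrix.

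Plugging in, in the positive case $x_i\otimes y_1$ contributes the eigenvalue $2\lambda_i+1$ (once for each $i$), and $x_i\otimes y_2$ contributes $0\cdot\lambda_i + (-1) = -1$ (giving multiplicity $m$); in the negative case $x_i\otimes y_1$ gives $2\lambda_i-1$ and $x_i\otimes y_2$ gives $1$, again with total multiplicity $m$. The list $\{x_i\otimes y_j : 1\le i\le m,\ j\in\{1,2\}\}$ is a basis of $\mathbb R^{2m}$, so these are all eigenvalues with their correct multiplicities. I do not expect a real obstacle here; the only thing to be careful about is checking the two identifications $K^{D^\pm(K_2^\sigma)}=K_2^\sigma$ and $2K^{D^\pm(\Sigma_2)}-A(\Sigma_2)=A(K_2^\sigma)$, after which everything reduces to the standard spectral behaviour of Kronecker sums with respect to a common eigenbasis.
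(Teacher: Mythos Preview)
Your proof is correct and follows essentially the same route as the paper: both reduce Theorem~\ref{lexmtx} in the case $n=2$ to $D(\Sigma_1)\otimes J_2 + I_m\otimes D(\Sigma_2)$ (your $A(K_2^\sigma)$ equals $D(\Sigma_2)$ here) and then diagonalize via tensor products $x_i\otimes y_j$ using the mixed-product rule, exploiting that $\mathbf{j}$ and its orthogonal complement are common eigenvectors of $J_2$ and $D(K_2^\sigma)$. Your version is slightly tidier in that you explicitly check compatibility of the product and argue that the $2m$ eigenvectors form a basis, points the paper leaves implicit.
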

\begin{proof}
	Let ${\bf{P}}_i$ be the eigenvector corresponding to the eigenvalue $\lambda_i$ of $D(\Sigma_1)$ for $1\leq i\leq m$. By Theorem \ref{lexmtx}, the distance matrix of $\Sigma_1[\Sigma_2]$ can be expressed as $D(\Sigma_1[\Sigma_2])=D(\Sigma_1)\otimes J_2
	+I_m\otimes D(\Sigma_2),$ where $J_2=J_{2\times 2}$. We deal with the following two cases.
	
		Case $(1)$ Suppose that $K_2$ is positive:\\
	Then, $1$ is an eigenvalue of $D(\Sigma_2)$ with eigenvector ${\bf j}$. Also, $J_2$ has eigenvalues $2$ with the eigenvector ${\bf j}$ and $0,$ each with multiplicity $1$. Then, 
	
	$D(\Sigma_1[\Sigma_2])({\bf{P}}_i\otimes {\bf{j}})=(D(\Sigma_1)\otimes J_2
	+I_m\otimes D(\Sigma_2))({\bf{P}}_i\otimes {\bf{j}})$

	= $D(\Sigma_1){\bf{P}}_i\otimes J_2{\bf{j}}+I_m{\bf{P}}_i\otimes D(\Sigma_2){\bf{j}}$
	= $\lambda_i{\bf{P}}_i\otimes 2{\bf{j}}+{{\bf{P}}_i\otimes} {\bf{j}}$
	
	=$(\lambda_i 2+1){(\bf{P_i\otimes j})}$\\
	That is, $2\lambda_i +1$ is an eigenvalue of $D(\Sigma_1[\Sigma_2])$ for $1\leq i\leq m.$
	
	Let $\bf{Q}$ be the eigenvector of $D(\Sigma_2)$ corresponding to the eigenvalue $-1.$ Then,
	
	$D(\Sigma_1[\Sigma_2])({\bf{P}}_i\otimes \bf{Q})=(D(\Sigma_1)\otimes J_2
	+I_m\otimes D(\Sigma_2))({\bf{P}}_i\otimes \bf{Q})$
	
	= $D(\Sigma_1){\bf{P}}_i\otimes J_2\bf{Q}+I_m{\bf{P}}_i\otimes D(\Sigma_2)\bf{Q}$
	= $\lambda_i{\bf{P}}_i\otimes 0\cdot{\bf{Q}}+{\bf{P}}_i\otimes( -1 )\bf{Q}$
	
	=$-1({\bf{P}}_i\otimes \bf{Q})$\\
	That is, $-1$ is an eigenvalue of $D(\Sigma_1[\Sigma_2])$ of multiplicity $m.$\\
	Case $(2)$ When $K_2$ is negative:\\
	Using the same proof as in $(1),$ above we can see that the eigenvalues of $D(\Sigma_1[\Sigma_2])$ are $2\lambda_i-1$, for $1\leq i\leq m$ each of multiplicity one and $1$ with multiplicity $m$.
	
\end{proof}

\section*{Acknowledgements}

The first and second authors would like to acknowledge their gratitude to the Council of Scientific and Industrial Research (CSIR), India, for the financial support under the CSIR Junior Research Fellowship scheme, vide order nos.: 09/1108(0032)/2018-EMR-I and 09/1108(0016)/2017-EMR-I, respectively. The fourth author would like to acknowledge her gratitude to Science and Engineering Research Board (SERB), Govt.\ of India, for the financial support under  the scheme Mathematical Research Impact Centric Support (MATRICS), vide order no.: File No. MTR/2017/000689.


\section*{References}
\begin{enumerate}	

\bibitem{scart} K.A. Germina,  K. Shahul Hameed and T. Zaslavsky, On product and line graphs signed graphs, their eigenvalues and energy, Linear Algebra Appl. 435 (2011) 2432--2450.

\bibitem{balance} F. Harary, On the notion of balance of a signed graph,  Michigan Math.\ J.\ 2 (1953--1954) 143--146.



\bibitem{stens} V. Mishra, Graph associated with (0,1) and (0,1,-1) matrices, Ph.D. Thesis, IIT Bombay, 1974.

\bibitem{scomp} K. Shahul Hameed, K.A. Germina, On Composition of Signed graphs, Discussiones Mathematicae Graph Theory, 32(2012) 507--516.

\bibitem{sdist} Shahul Hameed K, Shijin T V, Soorya P, Germina K A and T.\ Zaslavsky, Signed Distance in Signed Graphs, (communicated).

\bibitem{tdist} D. Stevanovi$\acute{c}$, Distance Regularity of Compositions of Graphs, Appl. Math. Lett. 17 (2004), 337–343.

\bibitem{wch} P. M. Weichsel, The Kronecker product of graphs, Vyc. Sis., 9(1963), 30-43.

\bibitem{tz1} T.\ Zaslavsky, Signed graphs,  Discrete Appl.\ Math.\ 4 (1982) 47--74.  Erratum,  Discrete Appl.\ Math.\ 5 (1983) 248.

\bibitem{tzpet} T.\ Zaslavsky, Six signed Petersen graphs, and their automorphisms, Discrete Mathematics 312 (2012) 1558--1583

\bibitem{kron} F. Zhang, Matrix Theory: Basic Theory and Techniques, Springer--Verlag (1999).

\end{enumerate}

\end{document}